\newtheorem{theorem}{Theorem}[section]
\newtheorem{lemma}[theorem]{Lemma}
\newtheorem{conjecture}[theorem]{Conjecture}
\newtheorem{proposition}[theorem]{Proposition}
\newtheorem{corollary}[theorem]{Corollary}
\newtheorem{nota}[theorem]{Notation}
\newtheorem{defn}[theorem]{Definition}
\DeclarePairedDelimiter\floor{\lfloor}{\rfloor}
\DeclareMathOperator{\Q}{\mathbb{Q}}
\newcommand{\fr}{\mathfrak{r}}
\newcommand{\uz}{\underline{0}}
\newcommand{\uc}{\underline{c}}
\newcommand{\uA}{\underline{A}}
\newcommand{\uB}{\underline{B}}
\newcommand{\ual}{\underline{\alpha}}
\newcommand{\ube}{\underline{\beta}}
\newcommand{\uep}{\underline{\epsilon}}
\newcommand{\A}{\mathcal{A}}
\newcommand{\tA}{\tilde\A}
\newcommand{\bB}{\mathbb{B}}
\newcommand{\C}{\mathbb{C}}
\newcommand{\fg}{\mathfrak{g}}
\newcommand{\tfg}{\tilde{\fg}} 
\newcommand{\fS}{\mathfrak{S}}
\newcommand{\N}{\mathbb{N}}
\newcommand{\Z}{\mathbb{Z}}
\DeclareMathOperator{\grad}{grad}
\DeclareSymbolFontAlphabet{\mathbb}{AMSb}
\DeclareSymbolFontAlphabet{\mathbbl}{bbold}
\newcommand{\one}{\mathbbl{1}} 
\newcommand{\supp}{\mathrm{supp}} 
\newcommand{\tr}{\mathbf{t}} 
\renewcommand{\l}{\lambda} 
\newcommand{\ul}{\underline{\l}} 
\newcommand{\Hilb}{\mathrm{Hilb}} 
\newcommand{\geo}{^{\mathrm{geo}}} 
\newcommand{\QE}{\Q\! E} 
\newcommand{\del}{\partial}
\newcommand{\set}[1]{\left\{#1\right\}}
\newcommand{\pa}[1]{\left(#1\right)}
\title[Generators and relations of cohomology of Hilbert schemes]{On generators and relations of the rational cohomology of Hilbert schemes}
\author{Andrea Bianchi}
 \thanks{Andrea Bianchi was supported by the Danish National Research Foundation through the Centre for Geometry and Topology (DNRF151) and by the European Research Council under the European Union’s Horizon 2020 research and innovation programme (grant agreement No. 772960)}
\email{anbi@math.ku.dk}
\address{Department of Mathematical Sciences, University of Copenhagen \newline
Universitetsparken 5, Copenhagen, 2100, Denmark}  
\author[Alexander M. Christgau]{Alexander Mangulad Christgau}
\thanks{}
\email{amc@math.ku.dk}
\address{Department of Mathematical Sciences, University of Copenhagen \newline
Universitetsparken 5, Copenhagen, 2100, Denmark}  
\author[Jonathan S. Pedersen]{Jonathan Sejr Pedersen$^\ast$}
\thanks{$^\ast$Corresponding author.}
\email{jsejrp@math.toronto.edu}
\address{Department of Mathematics,
University of Toronto \newline
Bahen Centre
40 St. George St., M5S 2E4,
Toronto, Ontario,
Canada}
\subjclass[2020]{
20B30,   	%Symmetric groups
13E15,  	%Commutative rings and modules of finite generation or presentation; number of generators
05A10,   	%Factorials, binomial coefficients, combinatorial functions
}
\date{\today}
\begin{document}
\begin{abstract}
We consider for $d\ge1$ the graded commutative $\mathbb{Q}$-algebra $\mathcal{A}(d):=H^*(\operatorname{Hilb}^d(\mathbb{C}^2);\mathbb{Q})$, which is also connected to the study of generalised Hurwitz spaces by work of the first author. These Hurwitz spaces are in turn related to the moduli spaces of Riemann surfaces with boundary.

We determine two distinct, minimal sets of $\floor{d/2}$ multiplicative generators of $\mathcal{A}(d)$. Additionally, we prove when the lowest degree generating relations occur. For small values of $d$ we also determine a minimal set of generating relations, which leads to several conjectures about the necessary generating relations for $\mathcal{A}(d)$.
\end{abstract}
\maketitle
\section{Introduction}
The main object of study in this article is the algebra $\A(d):=H^*(\Hilb^d(\C^2);\Q)$, i.e. the \emph{rational} cohomology algebra of the $d$\textsuperscript{th} Hilbert scheme of $\C^2$.
It is a finite dimensional, graded commutative $\Q$-algebra concentrated in even degrees \cite{Ellingsrud1993}; it admits a combinatorial description
as the associated graded of a certain \emph{norm filtration} on the rational group algebra $\Q[\fS_d]$ of the symmetric group $\fS_d$ on $d$ letters \cite{LehnSorger}. Thanks to this combinatorial description, $\A(d)$ can be identified as a commutative $\Q$-algebra with
$\Q[\fS_d^{\mathrm{geo}}]^{\fS_d}$, i.e. the conjugation invariants of the rational PMQ-algebra $\Q[\fS_d^{\mathrm{geo}}]$ associated with the \emph{partially multiplicative quandle} $\fS_d^{\mathrm{geo}}$ from \cite[Section 7]{Bianchi:Hur1}.

We define the \emph{norm} of a homogeneous element of even degree in a graded $\Q$-vector space to be half of the degree.
We denote by $\A(d)_{j}$ the homogeneous component of $\A(d)$ of norm $j$, and by $\A(d)_+=\bigoplus_{j\ge1}\A(d)_{j}$ the augmentation ideal, spanned by all elements of positive norm.

\subsection{Statement of results}
We study the numbers of generators and relations necessary to present $\A(d)$ as a graded commutative $\Q$-algebra.
\begin{theorem}
\label{thm:minimalgenerators}
A minimal set of homogeneous generators for $\A(d)$ as a $\Q$-algebra contains one element in each norm $j$ with $1\le j\le d/2$.

In other words, the module of \emph{indecomposables}
$\A(d)_+/\A(d)_+^2$
has dimension 1 in each norm $1\le j\le d/2$, and vanishes elsewhere.
\end{theorem}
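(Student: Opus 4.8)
The plan is to work entirely in the combinatorial model $\A(d)=\Q[\fS_d\geo]^{\fS_d}$ using the conjugacy-class-sum basis: for a partition $\l\vdash d$ let $C_\l$ denote the sum of all permutations of cycle type $\l$, so that $C_\l$ is homogeneous of norm $n(\l)=d-\ell(\l)$, where $\ell(\l)$ is the number of parts. I would reindex these classes by the reduced partition $\bar\mu$ obtained by decreasing every part $\ge 2$ by $1$ and discarding fixed points, so that the norm becomes $|\bar\mu|$ and the size of the support becomes $\supp(\bar\mu):=|\bar\mu|+\ell(\bar\mu)$; the class $C_{\bar\mu}$ then lies in $\A(d)$ precisely when $\supp(\bar\mu)\le d$. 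The first task is a multiplication lemma describing the geodesic (norm-additive) product in this basis: I would show that $C_{\bar\mu}\cdot C_{\bar\nu}$ equals a positive multiple of the disjoint-union class $C_{\bar\mu\cup\bar\nu}$ --- nonzero exactly when $\supp(\bar\mu)+\supp(\bar\nu)\le d$ --- plus a correction supported on classes $C_{\bar\lambda}$ of strictly smaller support (equivalently, strictly fewer parts). The reason is that a norm-additive product of permutations corresponds to a forest of transpositions, so the number of cycles, hence the support, is subadditive, with equality exactly when the supports are disjoint.

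Granting this, set $P_i:=C_{(i)}$, the class of a single $(i+1)$-cycle, and consider the algebra map $\Phi\colon\Q[u_1,u_2,\dots]\to\A(d)$ sending $u_i\mapsto P_i$. The multiplication lemma shows that $\Phi$ carries the monomial $u_{\bar\mu}=\prod_i P_{\bar\mu_i}$ to a positive multiple of $C_{\bar\mu}$ plus classes with fewer parts, whenever $\supp(\bar\mu)\le d$. Ordering reduced partitions by number of parts makes the family $\set{\Phi(u_{\bar\mu}) : \supp(\bar\mu)\le d}$ unitriangular over the basis $\set{C_{\bar\mu}}$, hence itself a basis of $\A(d)$. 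In particular $\Phi$ is surjective (the $P_i$ generate), and splitting the basis according to whether $\bar\mu$ has one part or at least two yields $\A(d)_j=\Q P_j+(\A(d)_+^2)_j$ for every $j$. This already gives the easy half of the theorem: $\dim (\A(d)_+/\A(d)_+^2)_j\le 1$, with equality if and only if $P_j\notin (\A(d)_+^2)_j$.

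It then remains to decide, for each $j$, whether $P_j$ is decomposable. For $1\le j\le \floor{d/2}$ every reduced partition $\bar\mu\vdash j$ satisfies $\supp(\bar\mu)=j+\ell(\bar\mu)\le 2j\le d$, so there are no ``overflow'' partitions and $\Phi$ restricts to a linear isomorphism $\Q[u_1,u_2,\dots]_j\xrightarrow{\ \sim\ }\A(d)_j$ of spaces of equal dimension $p(j)$. Transporting the functional ``coefficient of $u_{(j)}$'' across $\Phi^{-1}$ produces a linear functional on $\A(d)_j$ that vanishes on every product of two positive-norm classes (these are images of length-$\ge 2$ monomials) yet is nonzero on $P_j$; hence $P_j$ is indecomposable and the indecomposables have dimension exactly $1$ in this range.

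The crux is the range $j>\floor{d/2}$, where I would show $P_j\in(\A(d)_+^2)_j$, so that the indecomposables vanish. Here partitions $\bar\mu\vdash j$ with $\ell(\bar\mu)>d-j$, i.e. $\supp(\bar\mu)>d$, do occur, and for such monomials $\Phi(u_{\bar\mu})$ loses its leading disjoint-union term and ``collapses'' onto classes of smaller support, possibly onto $P_j$ itself. The plan is to combine a product that, for support reasons, lands in a single class with at least two parts (its support is forced to be maximal, leaving no fixed points for alternative merges) with the product $P_1\cdot P_{j-1}$, whose expansion contains $P_j$ with positive coefficient because a transposition can absorb a remaining fixed point of a $j$-cycle; isolating $P_j$ this way exhibits it as a combination of products. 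The main obstacle is precisely this non-vanishing statement: one must show that under the tight constraint $\supp=d$ the correction terms of the relevant geodesic factorizations genuinely reach the single-cycle class $P_j$ rather than cancelling, which amounts to proving positivity of a specific count of transposition-forests realizing a prescribed merge of cycles when no fixed points are available. Controlling these collapse coefficients, together with the possible cancellation among the several class-terms a single product produces, is the technical heart of the argument.
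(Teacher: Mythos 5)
Your first two steps are sound and essentially reproduce the paper's argument. The multiplication lemma (leading disjoint-union term with positive coefficient, corrections of strictly smaller support) is exactly the support-filtration argument behind Proposition~\ref{prop:sufficientgenerators}, and your dimension count plus the ``coefficient of $u_{(j)}$'' functional in norms $j\le d/2$ is Proposition~\ref{prop:algebraicindependence}. Together these give $\dim(\A(d)_+/\A(d)_+^2)_j\le 1$ for all $j$ and $=1$ for $1\le j\le d/2$.

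The genuine gap is the third step: showing $P_j\in(\A(d)_+^2)_j$ for $\lfloor d/2\rfloor<j\le d-1$, which is the hardest part of the theorem and which you yourself flag as ``the technical heart'' without resolving it. Your sketch --- isolate $P_j$ from $P_1\cdot P_{j-1}=b\,C_{(1,j-1)}+a\,P_j$ by separately decomposing $C_{(1,j-1)}$ --- only closes immediately when $j+2>d$ (so that the disjoint-union term dies); in the range $d/2<j<d-2$ it launches a recursion through classes $C_{(1,j-1)},C_{(1,1,j-2)},\dots$ whose collapse coefficients must be controlled and shown not to cancel, and you give no mechanism for doing so. The paper resolves exactly this point by a different device: it passes to $\A(2d)$, where no support overflow occurs in norms $\le d-1$, proves that the transposition-power classes $\delta_1,\dots,\delta_{d-1}$ generate $\A(2d)_{\le d-1}$ (Proposition~\ref{prop:deltagenerators}, via the explicit expansion of $\delta_s\cdot\gamma_{j+1-s}$ in Lemma~\ref{lem:ysexpansion} and the invertibility of a Pascal matrix whose determinant is a Catalan number, Lemma~\ref{lem:pascalarray}), and then applies the algebra map $p^{2d}_d$, which kills $\delta_i$ for $i>\lfloor d/2\rfloor$ and thereby exhibits every $\gamma_{j+1}(d)$ with $j>\lfloor d/2\rfloor$ as decomposable. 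Without this (or some equivalent positivity/non-cancellation argument), your proposal proves only the upper bound on the indecomposables and the statement in norms $\le d/2$, not the vanishing above $d/2$.
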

We fix a generating set $x_1,\dots,x_m$ for $\A(d)$, where $m\colon =\lfloor d/2\rfloor$ and $x_j$ has degree $j$. An explicit choice of such generators will be given in Section \ref{sec:minimalgenerators}. We consider the graded polynomial ring $\Q[X_1,\dots,X_m]$, where $X_j$ has degree $2j$, i.e. norm $j$, and we denote by $\Q[X_1,\dots,X_m]_+$ the augmentation ideal, spanned by monomials of positive norm. By Theorem \ref{thm:minimalgenerators} the graded algebra homomorphism $\phi\colon\Q[X_1,\dots,X_m]\to\A(d)$, given by $X_j\mapsto x_j$, is surjective; the homogeneous ideal $I_d:=\mathrm{ker}(\phi)$ contains all relations among the generators $x_1,\dots,x_m$; a set of \emph{minimal relations} for $\A(d)$ is a set of homogeneous elements of $I_d$ projecting to a vector space basis of the quotient $I_d/\Q[X_1,\dots,X_m]_+\cdot I_d$; the latter is by definition the module of \emph{indecomposable relations} of $\A(d)$, and it is a graded $\Q$-vector space.
\begin{theorem}
\label{thm:minimalrelationsbottom}
The generators $x_1,\dots,x_m$ are algebraically independent in norms up to $d-m$. For $d=2m\ge2$ even, there is precisely one minimal relation among $x_1,\dots,x_m$ in norm $m+1$; for $d=2m+1\ge5$ odd there are precisely two minimal relations among $x_1,\dots,x_m$ in norm $m+2$.

In other words, the module of indecomposable relations of $\A(d)$ is concentrated in norms at least $d-m+1$, and it has dimension 1 (for $d\ge2$ even) or dimension 2 (for $d\ge5$ odd) in norm $d-m+1$.
\end{theorem}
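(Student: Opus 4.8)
The plan is to work with the combinatorial model $\A(d)\cong\Q[\fS_d\geo]^{\fS_d}$ and to track dimensions norm-by-norm via Hilbert series. Write $P(t)=\sum_j \dim_\Q \A(d)_j\, t^j$ for the Poincaré series in the norm grading; this is computable from the known Betti numbers of $\Hilb^d(\C^2)$ (equivalently from partitions, since $\dim\A(d)_j$ counts partitions of $d$ into parts recording the grading by the Lehn–Sorger/Ellingsrud description). On the polynomial side, $\Q[X_1,\dots,X_m]$ has Hilbert series $\prod_{j=1}^m (1-t^j)^{-1}$. The idea is that the surjection $\phi$ gives a presentation, and the alternating sum of dimensions of generators, first relations, and higher syzygies is encoded in the comparison of these two Hilbert series; the generator count is already pinned down by Theorem~\ref{thm:minimalgenerators}, so the bottom of the relation module is controlled by the first norm where $\dim\A(d)_j$ falls below the coefficient of $t^j$ in $\prod_{j=1}^m(1-t^j)^{-1}$.

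The key steps, in order, are as follows. First I would establish the algebraic independence claim: that in norms $j\le d-m$ the map $\phi$ is injective, i.e. $\dim\A(d)_j$ equals the number of monomials in $X_1,\dots,X_m$ of norm $j$. I expect this to follow from a bijection between such monomials (equivalently, partitions of $j$ into parts of size $\le m$) and a spanning set of $\A(d)_j$ that one can show is linearly independent in this range; the bound $j\le d-m$ should be exactly the threshold below which no partition-theoretic collision forces a relation. Second, at norm $d-m+1$ I would compute both dimensions explicitly. For $d=2m$ even, I expect the monomial count to exceed $\dim\A(d)_{m+1}$ by exactly $1$, yielding a single indecomposable relation; for $d=2m+1$ odd, I expect the defect at norm $m+2$ to be exactly $2$. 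Here one must be careful that the defect in $\dim$ measures indecomposable relations only after quotienting by $\Q[X_1,\dots,X_m]_+\cdot I_d$, so I must verify that $I_d$ vanishes in all strictly smaller norms (which is precisely the algebraic-independence step) — this guarantees the decomposable relations contribute nothing in the first nontrivial norm, so the dimension count there gives the indecomposable relations directly.

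The main obstacle will be the exact dimension computation at norm $d-m+1$ together with showing there are no relations below it. The inequality $\dim\A(d)_j\le \#\{\text{monomials of norm }j\}$ for $j\le d-m$ is automatic from surjectivity of $\phi$, so the real content is the reverse inequality (injectivity), which requires exhibiting that the images $\phi(\text{monomial})$ remain linearly independent. I would try to prove this by identifying an explicit additive basis of $\A(d)$ adapted to the norm filtration — for instance via the geometric/quandle description of $\fS_d\geo$, where conjugacy-invariant elements are indexed by partitions — and checking that the leading terms of the monomials $x_1^{a_1}\cdots x_m^{a_m}$ are distinct in this basis precisely when $\sum j a_j\le d-m$. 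The passage from "distinct leading terms" to genuine linear independence, and the precise bookkeeping that produces the defect $1$ (even case) versus $2$ (odd case) at the critical norm, is where the argument will be most delicate; I would isolate the two smallest failing monomial relations by hand and confirm their images coincide, producing the claimed minimal relations.
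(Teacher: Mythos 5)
Your proposal is correct and follows essentially the same route as the paper: surjectivity of $\phi$ (from Theorem \ref{thm:minimalgenerators}) reduces everything to comparing, norm by norm, the number of monomials of norm $j$ in $X_1,\dots,X_m$ with $\dim_\Q\A(d)_j$, the latter counted by sequences $\ul$ with $N(\ul)=j$ and $\supp(\ul)\le d$; this is exactly how the paper proves Proposition \ref{prop:algebraicindependence} and Corollary \ref{cor:firstminrel}, including the observation that at the first norm where $I_d\neq0$ the defect directly computes the indecomposable relations. The one superfluous step in your plan is verifying linear independence of the images of monomials via distinct leading terms in an explicit basis: once the two dimensions agree, injectivity is automatic from surjectivity, so no such argument is needed.
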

We remark that $\A(1)\cong\Q$ and $\A(3)\cong\Q[X]/X^3$, with $X$ in degree 2.
We provide in fact explicit minimal presentations of $\A(d)$ for $d\le 10$, i.e. we compute explicitly the module of indecomposable relations: see Section \ref{sec:smallcases} and Appendix \ref{app:A9A10}.

\subsection{Motivation}
Our interest for minimal generators and relations of $\A(d)$ comes from the first author's work on generalised Hurwitz spaces \cite{Bianchi:Hur1,Bianchi:Hur2,Bianchi:Hur3,Bianchi:Hur4}. The algebra $\A(d)$ is also isomorphic to the rational cohomology algebra of a certain simply connected space $\bB(\fS_d\geo,\fS_d)$; one is however rather interested in computing the rational cohomology ring $H^*(\Omega^2_0\bB(\fS_d\geo,\fS_d);\Q)$ of a component of the double loop space, because this ring can be identified with the ring of stable rational cohomology classes of Hurwitz spaces with monodromies in $\fS_d\geo$. These Hurwitz spaces are in turn strongly related to moduli spaces of Riemann surfaces with boundary.

The natural approach to compute $H^*(\Omega^2_0\bB(\fS_d\geo,\fS_d);\Q)$ is to first compute a minimal Sullivan model for $\bB(\fS_d\geo,\fS_d)$, and then to desuspend twice this model. Among the generators of a minimal Sullivan model, we must find some that account for a minimal generating set of the cohomology ring  $\A(d)$, and some that account for a minimal set of relations among the minimal generators of $\A(d)$. Hence our computations give a lower bound on the number of generators of a minimal Sullivan model of $\bB(\fS_d\geo,\fS_d)$.

\subsection{Acknowledgments}
We would like to thank S\o ren Eilers for putting the first author in contact with the second and third author through the course ``Experimental mathematics'' at the University of Copenhagen. The first author would like to thank S\o ren Galatius for a helpful conversation on Sullivan models and $\QE_\infty$-homology. We thank the referees for their useful comments that helped improve the manuscript.

\tableofcontents

\section{Preliminaries on \texorpdfstring{$\A(d)$}{A(d)}}
\subsection{Symmetric groups as normed groups}
\label{subsec:defnAd}
Let $d\geq 2$, and consider the symmetric group $\fS_d$. It is a \emph{normed group}, i.e. there
exists a norm function $N\colon \fS_d\to\N$
with the following properties:
\begin{itemize}
 \item $N(\sigma)=0$ if and only if $\sigma=\one$;
 \item $N(\sigma)=N(\tau\sigma\tau^{-1})$ for all $\sigma,\tau$;
 \item $N(\sigma\tau)\leq N(\sigma)+N(\tau)$.
\end{itemize}
Explicitly, for all $\sigma\in\fS_d$, we define $N(\sigma)$ as the minimum $k\geq0$ such that
$\sigma$ can be written as a product $\tr_1\cdots\dots\cdot\tr_k$ of transpositions.
An equivalent characterisation of the norm is the following: if $\sigma\in\fS_d$ has a cycle decomposition with $1\le k\le d$ cycles, where fixpoints count as cycles of length 1, then $N(\sigma)=d-k$; see for instance \cite[Lemma 7.3]{Bianchi:Hur1}.
The following is \cite[Lemma 7.4]{Bianchi:Hur1}.
\begin{lemma}
 \label{lem:geodesictransposition}
 Let $\sigma\in\fS_d$ be a permutation and $\tr=(ij)\in\fS_d$ be a transposition interchanging the elements $i,j\in\set{1,\dots,d}$. Then $N(\tr\sigma)=N(\sigma)+1$
 if $i$ and $j$ belong to different cycles of the cycle decomposition of $\sigma$, and $N(\tr\sigma)=N(\sigma)-1$
 otherwise.
\end{lemma}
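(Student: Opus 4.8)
The plan is to reduce everything to the cycle-counting characterisation of the norm stated just above the lemma: if $\sigma$ has exactly $k$ cycles (counting fixpoints as cycles of length $1$), then $N(\sigma)=d-k$. Writing $k$ and $k'$ for the number of cycles of $\sigma$ and of $\tr\sigma$ respectively, we have $N(\tr\sigma)-N(\sigma)=(d-k')-(d-k)=k-k'$, so the entire statement amounts to showing that left multiplication by the transposition $\tr=(ij)$ changes the number of cycles by exactly one: it decreases the cycle count by one (hence raises the norm by one) when $i,j$ lie in different cycles of $\sigma$, and increases the cycle count by one (hence lowers the norm by one) when they lie in the same cycle. I fix the convention $(\tr\sigma)(x)=\tr(\sigma(x))$ throughout. (One could instead note that the triangle inequality gives $N(\tr\sigma)\le N(\sigma)+1$ and, applied to $\sigma=\tr\cdot\tr\sigma$, also $N(\sigma)\le N(\tr\sigma)+1$, so $|N(\tr\sigma)-N(\sigma)|\le1$; but this alone does not pin down the \emph{sign}, which is exactly what the cycle analysis supplies.)

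First I would treat the case where $i$ and $j$ lie in different cycles. Writing the cycle through $i$ as $(i\,c_1\,\dots\,c_p)$ with $\sigma(c_p)=i$, and the cycle through $j$ as $(j\,e_1\,\dots\,e_q)$ with $\sigma(e_q)=j$, the only values of $\sigma$ that $\tr$ alters are $\sigma(c_p)=i\mapsto j$ and $\sigma(e_q)=j\mapsto i$; every other output of $\sigma$ is fixed by $\tr$. Tracing the orbit of $i$ under $\tr\sigma$ then shows that the two cycles fuse into the single cycle $(i\,c_1\,\dots\,c_p\,j\,e_1\,\dots\,e_q)$, while every cycle containing neither $i$ nor $j$ is untouched; hence $k'=k-1$.

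The same-cycle case is dual. Writing the common cycle, started at $i$, as $(i\,a_1\,\dots\,a_p\,j\,b_1\,\dots\,b_q)$ with $\sigma(a_p)=j$ and $\sigma(b_q)=i$, the transposition now redirects $a_p\mapsto i$ and $b_q\mapsto j$, so following the orbits under $\tr\sigma$ splits the single cycle into $(i\,a_1\,\dots\,a_p)$ and $(j\,b_1\,\dots\,b_q)$, giving $k'=k+1$. Combining the two cases with the identity $N(\tr\sigma)-N(\sigma)=k-k'$ yields the claim.

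The computation is essentially bookkeeping, so the only real care needed is to make the merge/split argument watertight in the degenerate configurations: when $i$ or $j$ is a fixpoint (an empty list $c_1,\dots,c_p$ or $a_1,\dots,a_p$), or when the common cycle in the second case is the transposition $(ij)$ itself, so that $\tr\sigma$ fixes both $i$ and $j$. I would handle these uniformly by allowing the index ranges to be empty and reading the displayed cycles accordingly, which makes the fusion and splitting formulas hold verbatim. This robustness against degeneracies is the main thing to verify, and I expect no genuine obstacle beyond it.
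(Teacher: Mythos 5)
Your proof is correct. The paper itself gives no argument for this lemma --- it is quoted verbatim from \cite[Lemma 7.4]{Bianchi:Hur1} --- so there is nothing internal to compare against; your reduction to the cycle-count formula $N(\sigma)=d-k$ (stated just before the lemma), followed by the standard merge/split analysis of cycles under left multiplication by a transposition, is exactly the expected argument, and your handling of the degenerate configurations (fixpoints, the cycle $(i\,j)$ itself) is sound.
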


The same result holds in Lemma \ref{lem:geodesictransposition} if we consider the product $\sigma\tr$ instead of $\tr\sigma$. The following is \cite[Corollary 7.5]{Bianchi:Hur1}
\begin{corollary}
 \label{cor:geodesicpair}
 Let $\sigma,\tau\in\fS_d$ and suppose that $N(\sigma\tau)=N(\sigma)+N(\tau)$. Let $c$ be a cycle in
 the cycle decomposition of $\sigma$, and consider $c$ as a subset of $\{1,\dots,d\}$. Then
 $c$ is contained in some cycle $c'$ of the cycle decomposition of $\sigma\tau$.
\end{corollary}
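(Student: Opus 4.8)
The plan is to reduce the statement to the special case in which $\tau$ is a single transposition and then iterate. I would begin by choosing a \emph{geodesic} factorization $\tau=\tr_1\cdots\tr_k$ into $k=N(\tau)$ transpositions, which exists by the very definition of the norm, and set $\rho_0=\sigma$ and $\rho_\ell=\sigma\,\tr_1\cdots\tr_\ell$ for $1\le\ell\le k$, so that $\rho_k=\sigma\tau$. The goal is to show that passing from $\rho_{\ell-1}$ to $\rho_\ell$ never \emph{splits} a cycle, but can only \emph{merge} two cycles into one. Granting this, the partition of $\set{1,\dots,d}$ into cycles becomes coarser (or stays equal) at every step along the chain $\rho_0,\rho_1,\dots,\rho_k$, so the cycle partition of $\sigma$ refines that of $\sigma\tau$; in particular every cycle $c$ of $\sigma=\rho_0$ is contained in a cycle $c'$ of $\sigma\tau=\rho_k$, which is exactly the claim.

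First I would pin down the norm changes along the chain. By the right-multiplication version of Lemma \ref{lem:geodesictransposition}, each step changes the norm by exactly $\pm1$; say $a$ of the steps contribute $+1$ and $b$ contribute $-1$, so that $a+b=k$ and, telescoping, $N(\rho_k)-N(\rho_0)=a-b$. On the other hand the hypothesis $N(\sigma\tau)=N(\sigma)+N(\tau)$ gives $N(\rho_k)-N(\rho_0)=k$. Hence $a-b=a+b=k$, forcing $b=0$: every single step increases the norm by exactly $1$. This is the engine of the argument, and I expect it to be the conceptually essential point — the geodesic additivity hypothesis is precisely what rules out any norm-decreasing step.

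It then remains to check that a norm-increasing step can only merge cycles. By Lemma \ref{lem:geodesictransposition} a $+1$ step $\rho_\ell=\rho_{\ell-1}\,\tr$ with $\tr=(ij)$ occurs exactly when $i$ and $j$ lie in different cycles of $\rho_{\ell-1}$. Writing out $\rho_\ell(x)=\rho_{\ell-1}((ij)(x))$ shows that $\rho_\ell$ agrees with $\rho_{\ell-1}$ outside $\set{i,j}$, that $\rho_\ell(i)=\rho_{\ell-1}(j)$ and $\rho_\ell(j)=\rho_{\ell-1}(i)$, and that consequently the two cycles of $\rho_{\ell-1}$ through $i$ and through $j$ are fused into a single cycle while all other cycles are left untouched; no cycle is ever broken. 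This verification is a short and routine computation rather than a real obstacle. Combining it with the previous paragraph, every step realises a coarsening of the cycle partition, and transitivity of the refinement relation yields the containment of each cycle $c$ of $\sigma$ in a cycle $c'$ of $\sigma\tau$.
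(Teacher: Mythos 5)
Your proof is correct. The paper does not actually prove Corollary \ref{cor:geodesicpair}; it imports it from \cite[Corollary 7.5]{Bianchi:Hur1}, and your argument --- factoring $\tau$ geodesically into $N(\tau)$ transpositions, using the additivity hypothesis to force every intermediate step to be norm-increasing, and checking that a norm-increasing right multiplication by a transposition only merges two cycles --- is precisely the expected derivation from Lemma \ref{lem:geodesictransposition} and its right-multiplication variant, with all steps (the telescoping count and the merge computation) carried out correctly.
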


\subsection{From \texorpdfstring{$\Q[\fS_d]$}{Q[Sd]} to \texorpdfstring{$\A(d)$}{A(d)}}
The group algebra $\Q[\fS_d]$ is then a filtered $\Q$-algebra: for all $k\geq0$ we can define $F^N_k=F_k\Q[\fS_d]$ as the sub-vector space spanned by all $\sigma$ with $N(\sigma)\leq k$; we also set by convention $F^N_{-1}=0$. We have $F^N_k\cdot F^N_{k'}\subseteq F^N_{k+k'}$ for all $k,k'\geq0$, as the norm satisfies the triangle inequality.
Hence the associated graded 
\[
\grad^N\Q[\fS_d]:=\bigoplus_{k\geq0} F^N_k/F^N_{k-1}
\]
is a graded $\Q$-algebra. Explicitly, $\grad^N\Q[\fS_d]$
is generated as a vector space by elements $[\sigma]$ of norm $N(\sigma)$, for all $\sigma\in\fS_d$; we regard $[\sigma]$ as an element of degree $2N(\sigma)$ for later convenience. The multiplication is given by the following formula:
\[
 [\sigma]\cdot[\tau]=\left\{
 \begin{array}{cl}
  [\sigma\tau] & \mbox{if } N(\sigma\tau)=N(\sigma)+N(\tau);\\
  0 & \mbox{if }N(\sigma\tau)<N(\sigma)+N(\tau).
 \end{array}
 \right.
\]
Note that this algebra is just $\Q\cong\Q\left<\one\right>$ in norm $0$, since the only norm $0$ permutation is the identity permutation $\one\in\fS_d$.

The symmetric group $\fS_d$ acts on this entire construction by conjugation: it acts by $\Q$-algebra automorphisms
on $\Q[\fS_d]$, preserving the filtration $F^N_{\bullet}$: here we use that the norm is conjugation invariant.
Hence there is an induced action of $\fS_d$ by automorphisms of graded algebra on $\grad^N\Q[\fS_d]$. By the main result of \cite{LehnSorger} we have an isomorphism of graded $\Q$-algebras
\[
 \A(d):=H^*(\Hilb^d(\C^2);\Q)\cong\pa{\grad^N\Q[\fS_d]}^{\fS_d},
\]
where the right hand side is the subalgebra of $\fS_d$-invariants. In particular the right hand side is a commutative algebra: this follows also from \cite[Lemma 4.31]{Bianchi:Hur1}.

From now on we will consider $\A(d)$ as a subalgebra of 
$\grad^N\Q[\fS_d]$.
Before proceeding in our study of $\A(d)$, we have to fix some notation that will remain in effect throughout the paper.

\subsection{Some notation}\label{subsec:notation}
Recall that a conjugacy class in $\fS_d$ is determined by its cycle type, which is to say, a sequence $\ul=(\l_2,\l_3,\dots)$ of
integers $\geq0$, with almost all terms equal to 0, such that number $\supp(\ul):=\sum_{i=2}^\infty i\l_i$
is $\leq d$. For such a sequence
define $(\ul)\subset\fS_d$ to be the corresponding conjugacy class, containing all permutations whose cycle
decomposition has precisely $\l_i$ cycles of length $i$ for all $i\geq2$, and the right number of fixpoints to reach $d$: note that the \emph{support} $\supp(\sigma)$ of such a permutation, i.e. the cardinality of the subset
of $\set{1,\dots,d}$ on which $\sigma$ acts non-trivially, is equal to $\supp(\ul)$. Define
\[
 \fg_{\ul}=\fg_{\ul}(d):=\sum_{\sigma\in(\ul)}[\sigma].
\]
Then the elements $\fg_{\ul}$ belong to $\A(d)\subset\grad^N\Q[\fS_d]$, and
for $(\ul)$ ranging among conjugacy classes of $\fS_d$, these elements form a basis of $\A(d)$ as
$\Q$-vector space.

For $j\ge 0$ we denote by $\A(d)_{j}$ the norm $j$ summand of $\A(d)$,
and by $\A(d)_{\le j}$ the direct sum of all $\A(d)_{j'}$ for $j'\le j$.
For $k\ge0$ we denote $\Lambda(k,d)$ the set of all sequences $\ul=(\l_2,\l_3,\dots)$ as above with $\supp(\ul)\le d$ and such that the number $N(\ul):=\sum_{i=2}^{\infty}(i-1)\l_i$ is equal to $k$. Then $\A(d)_{j}$ has a basis given by the elements $\fg_{\ul}$ for $\ul$ in $\Lambda(j,d)$. The following two families of basis elements will play a central role as each of them turns out to give a generating set for $\A(d)$, as a $\Q$-algebra:
\begin{itemize}
    \item For $1\le i\le m:=\lfloor d/2\rfloor$ we denote by $\delta_i$ the element $\fg_{\ul}\in\A(d)_{i}$ corresponding to the sequence $\ul=(\l_2,\l_3,\dots)$ with $\l_2=i$ and all $\l_j=0$ for $j\neq 2$.
    \item For $2\le i\le d$ we denote by $\one_i$ the sequence $\ul=(\l_2,\l_3,\dots)$ with $\l_i=1$ and all $\l_j=0$ for $j\neq i$, and we denote by $\gamma_i$ the element $\fg_{\one_i}\in\A(d)_{i-1}$.
\end{itemize}
Whenever there is risk for confusion, i.e. there is an argument involving two algebras $\A(d)$ and $\A(h)$
for $d\le h$, we will write $\fg_{\ul}(d)$ for a basis element of $\A(d)$, and $\fg_{\ul}(h)$ for a basis
element of $\A(h)$: if the sequence $\ul=(\l_2,\l_3,\dots)$ satisfies $\supp(\ul)\leq d\le h$, then
$\ul$ gives indeed rise to basis elements of both $\A(d)$ and $\A(h)$.

\section{A sufficient set of generators}
Recall that for $2\le i\le d$ we have the basis elements $\gamma_i=\fg_{\one_i}$ of $\A(d)$ corresponding to the sequences $\one_i$. In this section we prove the following proposition.
\begin{proposition}
\label{prop:sufficientgenerators}
The elements $\gamma_2,\dots,\gamma_{d}$ generate $\A(d)$ as a $\Q$-algebra.
\end{proposition}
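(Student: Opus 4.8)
The plan is to exploit the basis $\{\fg_{\ul}\}$ together with a secondary grading by support. Fix a norm $k$ and consider the finite-dimensional space $\A(d)_{k}$, with basis $\{\fg_{\ul} : \ul\in\Lambda(k,d)\}$. For each such $\ul=(\l_2,\l_3,\dots)$ I form the monomial $P_{\ul}:=\prod_{i\ge2}\gamma_i^{\l_i}$, which lies in $\A(d)_{k}$ since $\gamma_i$ has norm $i-1$ and $\sum_i(i-1)\l_i=N(\ul)=k$. These monomials are indexed by the same set $\Lambda(k,d)$ as the basis, and I would show that they expand triangularly with respect to support: $P_{\ul}=c_{\ul}\fg_{\ul}+(\text{terms }\fg_{\ub}\text{ with }\supp(\ub)<\supp(\ul))$ for a positive integer $c_{\ul}$. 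Triangularity with nonzero diagonal then lets me solve for each $\fg_{\ul}$ as a polynomial in the $\gamma_i$ by downward induction on $\supp(\ul)$, which proves the proposition.

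The key step is to expand $P_{\ul}$ by multilinearity. Each $\gamma_i=\sum_{\sigma}[\sigma]$ sums over single $i$-cycles, so $P_{\ul}$ is a sum of products $[\sigma_1]\cdots[\sigma_r]$ over tuples of cycles of the prescribed lengths; by the multiplication rule each such product equals $[\sigma_1\cdots\sigma_r]$ if it is geodesic and $0$ otherwise. Now $\supp(\sigma_1\cdots\sigma_r)$ is contained in $\bigcup_k\supp(\sigma_k)$, whose cardinality is $\le\sum_k i_k=\supp(\ul)$, with equality exactly when the supports are pairwise disjoint. I would argue that the contribution of maximal support $\supp(\ul)$ comes precisely from the disjoint tuples: when the supports are disjoint the cycles commute and multiply geodesically, yielding a permutation of cycle type $\ul$. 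A count then shows that each fixed $\tau\in(\ul)$ arises from exactly $\prod_i\l_i!$ disjoint ordered tuples (the orderings of its cycles among the factors of matching length), so the maximal-support part of $P_{\ul}$ is $\big(\prod_i\l_i!\big)\,\fg_{\ul}$, giving $c_{\ul}=\prod_i\l_i!>0$.

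All remaining tuples have at least two overlapping supports, hence $\supp(\sigma_1\cdots\sigma_r)<\supp(\ul)$, so every other basis element appearing in $P_{\ul}$ has strictly smaller support; this yields the claimed triangular expansion. I expect the main obstacle to be the bookkeeping around the geodesic condition: I must confirm that no overlapping tuple can climb back up to full support $\supp(\ul)$ (which follows because any overlap strictly shrinks the union of supports), and that the surviving geodesic products really reassemble into the invariant classes $\fg_{\ub}$ rather than individual $[\sigma]$'s. This last point should be automatic, since $P_{\ul}$ is $\fS_d$-invariant and hence its expansion in the basis is forced to be a combination of the conjugation-invariant elements $\fg_{\ub}$. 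With the triangular expansion established, downward induction on support finishes the argument.
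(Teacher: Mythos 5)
Your proof is correct and is essentially the paper's argument: the paper packages your ``triangular with respect to support'' expansion as the associated graded $\tA(d)$ of the support filtration, where the same computation gives $\prod_i\tilde\gamma_i^{\l_i}=\bigl(\prod_i\l_i!\bigr)\tfg_{\ul}$, and then invokes the standard lemma that generators of the associated graded of a bounded filtration lift to generators. The only (harmless) quibble is that your induction on support should run upward rather than downward, but since your change-of-basis matrix is triangular with nonzero diagonal entries $\prod_i\l_i!$, invertibility holds either way.
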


It will be crucial for our argument that we are working over the field
$\Q$ and not over a field of finite characteristic.

\subsection{The support filtration}
Recall that for $\sigma\in\fS_d$ we have defined the \emph{support}
\[
 \supp(\sigma)=\sum_{i=2}^{\infty}i\l_i,
\]
where $\l_i$ is the number of $i$-cycles in the cycle decomposition of $\sigma$. The support measures how many
non-fix-points the permutation $\sigma$ has, and it is an invariant of the conjugacy class of $\sigma$.
Note also that $\supp(\sigma)=0$ if and only if $\sigma=\one$, and that the triangular inequality $\supp(\sigma\tau)\leq \supp(\sigma)+\supp(\tau)$ holds. Therefore, the support gives an alternative norm
on the group $\fS_d$. We thus have a new conjugation invariant, multiplicative filtration
$F^{\supp}_{\bullet}$ on $\Q[\fS_d]$, inducing a multiplicative filtration, also called $F^{\supp}_{\bullet}$ for simplicity, on $\grad^N\Q[\fS_d]$. Explicitly, for $k\ge0$, the vector space
$F^{\supp}_k\subset\grad^N\Q[\fS_d]$ is spanned by the basis elements $[\sigma]$ corresponding to all permutations $\sigma\in\fS_d$ with $\supp(\sigma)\leq k$.

We take the associated graded $\grad^{\supp}\grad^N\Q[\fS_d]$,
which is a bigraded algebra, generated as a vector space by elements $[[\sigma]]$ for all permutations
$\sigma\in\fS_d$, with multiplication rule
\[
 [[\sigma]]\cdot[[\tau]]\!=\!\left\{\!\!
 \begin{array}{cl}
  [[\sigma\tau]] & \mbox{if } N(\sigma\tau)=N(\sigma)+N(\tau)\mbox{ and }\supp(\sigma\tau)=\supp(\sigma)+\supp(\tau);\\
  0 & \mbox{otherwise}.
 \end{array}
 \right.
\]
The element $[[\sigma]]$ lives in bigrading $(N(\sigma),\supp(\sigma))$.
Note that if $\sigma$ and $\tau$ are permutations such that $\supp(\sigma\tau)=\supp(\sigma)+\supp(\tau)$,
then the set of non-fixed-points of $\sigma$ must be disjoint from the set of non-fixed points of $\tau$ (both are subsets of $\set{1,\dots,d}$); in this case, the further equality $N(\sigma\tau)=N(\sigma)+N(\tau)$ follows automatically.

We can now take $\fS_d$-invariants of $\grad^{\supp}\grad^N\Q[\fS_d]$, or equivalently restrict
the filtration $F^{\supp}_{\bullet}$ from $\grad^N\Q[\fS_d]$ to the subalgebra $\A(d)$, and then take the associated graded: in both cases
we obtain a bigraded $\Q$-algebra $\tA(d)$, with basis as a $\Q$-vector space given by the elements
\[
 \tfg_{\ul}=\sum_{\sigma\in(\ul)}[[\sigma]],
\]
for $\ul$ ranging in $\Lambda(0,d)\cup\dots\dots\Lambda(d-1,d)$.

Since $\A(d)$ was commutative, also $\tA(d)$ is commutative. We want now to check that $\tA(d)$ is generated
as a $\Q$-algebra by the elements $\tilde{\gamma}_i$ for $2\leq i\leq d$: this suffices to conclude that also
$\A(d)$ is generated by the elements $\gamma_i$ for $2\leq i\leq d$, thanks to the following well-known statement.
\begin{lemma}
\label{lem:associatedgraded}
Let $R$ be a $\Q$-algebra with a multiplicative filtration $F_{\bullet}$, with $F_{-1}=0$ and $F_d=R$ for some $d\ge0$. Let $x_1,\dots,x_r$ be elements of $R$, with $x_j\in F_{k_j}$, for suitably chosen integers $k_j$.
Denote by $\bar x_j$ the image of $x_j$ in $F_{k_j}/F_{k_j-1}\subset\grad R$. Assume that the elements
$\bar x_1,\dots,\bar x_r$ generate $\grad R$ as a $\Q$-algebra.
Then the elements $x_1,\dots,x_r$ generate $R$ as a $\Q$-algebra.
\end{lemma}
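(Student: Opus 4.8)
The plan is to show that the subalgebra $S\subseteq R$ generated by $x_1,\dots,x_r$ (together with the scalars and the unit) is all of $R$, by an upward induction on the filtration degree. Concretely, I would prove that $F_k\subseteq S$ for every $k$, starting from the trivial base case $F_{-1}=0\subseteq S$. Since $F_d=R$ by hypothesis, this yields $S=R$ after finitely many steps, which is exactly the desired conclusion.

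For the inductive step, fix $k\ge0$, assume $F_{k-1}\subseteq S$, and take an arbitrary $y\in F_k$. Its class $\bar y$ in the degree-$k$ summand $F_k/F_{k-1}$ of $\grad R$ is homogeneous of degree $k$. Since the elements $\bar x_j$ generate $\grad R$ as a graded algebra, and $\grad R$ is graded with $\bar x_j$ homogeneous of degree $k_j$, I can write $\bar y$ as a polynomial in the $\bar x_j$; by discarding all homogeneous components of degree $\ne k$, I may assume this polynomial $P$ is a $\Q$-linear combination of monomials $\bar x_{j_1}\cdots\bar x_{j_s}$ whose weighted degrees $k_{j_1}+\dots+k_{j_s}$ all equal $k$ (with the empty, i.e. unit, monomial allowed only when $k=0$).

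The key step is to lift $P$ back to $R$. I form $z=P(x_1,\dots,x_r)\in R$ by evaluating the same monomials in $R$. Because the filtration is multiplicative, each such monomial lies in $F_{k_{j_1}+\dots+k_{j_s}}=F_k$, so $z\in F_k$; and because the multiplication on $\grad R$ is precisely ``multiply in $R$, then project into $F_k/F_{k-1}$'', the image of $z$ in $F_k/F_{k-1}$ is exactly $P(\bar x_1,\dots,\bar x_r)=\bar y$. Hence $y-z\in F_{k-1}\subseteq S$ by the inductive hypothesis, while $z\in S$ by construction, so $y=z+(y-z)\in S$. This closes the induction and gives $F_k\subseteq S$ for all $k$, whence $R=F_d\subseteq S$.

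The argument is essentially formal, so I do not expect a genuine obstacle. The only point requiring care is the identification in the lifting step: that evaluating the homogeneous degree-$k$ polynomial $P$ in $R$ produces an element of $F_k$ whose associated-graded class agrees with $P$ evaluated in $\grad R$. This hinges on the fact that in $\grad R$ a product of homogeneous elements whose degrees add up to exactly $k$ is represented by the honest product of lifts, projected into $F_k/F_{k-1}$. Keeping the weighted degrees of all monomials pinned to exactly $k$ (rather than merely $\le k$) is what makes this clean, and it is the detail I would verify most carefully.
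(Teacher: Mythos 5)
Your proof is correct: the upward induction on filtration degree, lifting a homogeneous polynomial expression for $\bar y$ to an element $z\in F_k$ with the same class and absorbing $y-z\in F_{k-1}$ by the inductive hypothesis, is exactly the standard argument for this fact. The paper itself states the lemma as well known and gives no proof, so there is nothing to compare against beyond noting that your write-up (including the care about pinning the weighted degrees of the monomials to exactly $k$) is a complete and valid justification.
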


\subsection{Generating all basis elements}
Let $\ul=(\l_2,\l_3,\dots)$ give a basis element $\tfg_{\ul}$ of $\tA(d)$. Consider the product
\[
 \pi=(\tilde{\gamma}_2)^{\l_2}\cdot(\tilde{\gamma}_3)^{\l_3}\cdot\dots(\tilde{\gamma}_d)^{\l_d}\in\tA(d).
\]
Then it suffices to prove that $\pi$ is a non-zero multiple of $\tfg_{\ul}$, to show that $\tfg_{\ul}$ can be generated multiplicatively by our generators. We have indeed
\[
\pi=(\l_2)!\cdot(\l_3)!\dots(\l_d)!\cdot\tfg_{\ul}\in\tA(d).
\]
This formula follows immediately from the following multiplication formula for $\tA(d)$: for $\ul,\ul'$ with $\supp(\ul),\supp(\ul')\le d$ we have
\[
 \tfg_{\ul}\cdot\tfg_{\ul'}=\left\{
 \begin{array}{cl}
  \binom{\l_2+\l'_2}{\l_2}\cdot\binom{\l_3+\l'_3}{\l_3}\dots\binom{\l_d+\l'_d}{\l_d}\cdot\tfg_{\ul+\ul'},
  &\mbox{if }\supp(\ul)+\supp(\ul')\leq d\\[9pt]
  0 &\mbox{otherwise}.
 \end{array}
 \right.
\]
Here $\ul=(\l_2,\l_3,\dots)$ and $\ul'=(\l'_2,\l'_3,\dots)$ respectively, and we denote $\ul+\ul':=(\l_2+\l'_2,\l_3+\l'_3,\dots)$.
The multiplication formula is justified as follows.
\begin{itemize}
 \item If we compute $\tfg_{\ul}\cdot\tfg_{\ul}$ in the larger ring $\grad^{\supp}\grad^N\Q[\fS_d]$, we have to sum all products $[[\sigma]]\cdot[[\tau]]$ for all $\sigma\in(\ul)$
 and $\tau\in(\ul')$; the product $[[\sigma]]\cdot[[\tau]]$ vanishes unless $\sigma$ and $\tau$ have disjoint supports (as subsets of  $\set{1,\dots,d}$), in which
 case $[[\sigma]]\cdot[[\tau]]=[[\sigma\tau]]$, where $\sigma\tau$ is a permutation with $\l_i+\l'_i$ cycles of length
 $i$, for all $i\ge 2$; hence $\sigma\tau\in(\ul+\ul')$.
 \item Given a generic permutation $\rho\in(\ul+\ul')$, there are
 $\prod_{i\ge2}\binom{\l_i+\l'_i}{\l_i}$ ways to write it
 as a product $\sigma\tau$, with $\sigma\in(\ul)$ and $\tau\in (\ul')$; note that it is automatic that
 $\sigma$ and $\tau$ have disjoint supports, hence for each such decomposition of $\rho$ we have indeed
 $[[\sigma]]\cdot[[\tau]]=[[\rho]]$.
\end{itemize}
The previous argument shows that the elements $\tilde\gamma_i$, for $i\ge2$, generate $\tA(d)$ as a $\Q$-algebra. By Lemma \ref{lem:associatedgraded} we obtain that the elements $\gamma_i$, for $i\ge2$, generate $\A(d)$ as a $\Q$-algebra, which is the statement of Proposition \ref{prop:sufficientgenerators}.

\subsection{Algebraic independence in low norm}
\label{subsec:algebraicindependence}
The next step is to show that the generators $\gamma_2,\dots,\gamma_d$ of $\A(d)$ have no relations in small norm. Let $X_i$ be a variable in degree $2i$, i.e. in norm $i$, and let $\Q[X_1,\dots,X_{d-1}]$ denote the polynomial
algebra in the variables $X_i$, which is also a graded commutative algebra. By sending $X_i\mapsto \gamma_{i+1}$ we obtain a surjective map of graded $\Q$-algebras
\[
 \phi\colon \Q[X_1,\dots,X_{d-1}]\to\A(d).
 \]
\begin{proposition}
\label{prop:algebraicindependence}
For all $d\ge1$ and all $j\leq d/2$, the map $\phi$ restricts to an isomorphism of vector spaces $\Q[X_1,\dots,X_{d-1}]_{j}\cong\A(d)_{j}$.
\end{proposition}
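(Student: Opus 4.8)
The plan is to reduce the claimed isomorphism to a dimension count. Since $\phi$ is a homomorphism of graded algebras and is surjective by Proposition \ref{prop:sufficientgenerators}, its restriction to each norm-$j$ summand $\phi\colon\Q[X_1,\dots,X_{d-1}]_{j}\to\A(d)_{j}$ is again surjective. A surjection between finite-dimensional $\Q$-vector spaces of equal dimension is an isomorphism, so it suffices to prove that $\dim_\Q\Q[X_1,\dots,X_{d-1}]_{j}=\dim_\Q\A(d)_{j}$ for all $j\le d/2$. I would compute both sides as numbers of integer partitions and check that they agree in this range.

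First the source. A monomial $X_1^{a_1}\cdots X_{d-1}^{a_{d-1}}$ has norm $\sum_i i\,a_i$, so monomials of norm $j$ correspond bijectively to partitions of $j$ whose parts are at most $d-1$. For $j\le d/2$ every part of a partition of $j$ is at most $j\le d/2\le d-1$, so this bound is vacuous and $\dim_\Q\Q[X_1,\dots,X_{d-1}]_{j}=p(j)$, the number of partitions of $j$. Next the target. By Section \ref{subsec:notation} the space $\A(d)_{j}$ has basis $\set{\fg_{\ul}}$ for $\ul\in\Lambda(j,d)$, so $\dim_\Q\A(d)_{j}=|\Lambda(j,d)|$. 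I would set up a bijection between $\Lambda(j,d)$ and partitions of $j$: a sequence $\ul=(\l_2,\l_3,\dots)$ with $N(\ul)=\sum_{i\ge2}(i-1)\l_i=j$ records, for each $i\ge2$, exactly $\l_i$ cycles each contributing $i-1$ to the norm, hence corresponds to the partition of $j$ having $\l_i$ parts equal to $i-1$. Under this bijection the number of parts is $k:=\sum_{i\ge2}\l_i$, and since $\supp(\ul)=\sum_{i\ge2}i\l_i=j+k$, the constraint $\supp(\ul)\le d$ becomes $k\le d-j$. Thus $|\Lambda(j,d)|$ equals the number of partitions of $j$ into at most $d-j$ parts.

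The single place where the hypothesis $j\le d/2$ enters is the comparison of these two counts. A partition of $j$ has at most $j$ parts, and $j\le d/2$ gives $d-j\ge d/2\ge j$, so the condition ``at most $d-j$ parts'' holds automatically; hence $|\Lambda(j,d)|=p(j)$ as well, matching the source and completing the argument. I expect the only delicate step to be the bookkeeping in the bijection $\Lambda(j,d)\leftrightarrow\{\text{partitions of }j\}$, in particular the identity $\supp(\ul)=N(\ul)+k$ that converts the support bound into a bound on the number of parts; once this is in hand, the role of $j\le d/2$---simultaneously rendering the part-size bound on the source and the number-of-parts bound on the target inactive---is immediate. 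Note that for $j>d/2$ these two bounds genuinely begin to differ, which is consistent with relations first appearing in higher norm, as recorded in Theorem \ref{thm:minimalrelationsbottom}.
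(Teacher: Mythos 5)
Your proposal is correct and follows essentially the same route as the paper: reduce to a dimension count using surjectivity from Proposition \ref{prop:sufficientgenerators}, then observe that for $j\le d/2$ the support constraint on $\Lambda(j,d)$ is automatic. Your identity $\supp(\ul)=N(\ul)+k\le 2j\le d$ (via the bound $k\le j$ on the number of parts) is the same inequality the paper writes directly as $\sum_i i\l_i\le\sum_i(2i-2)\l_i=2j\le d$, just phrased in partition language.
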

\begin{proof}
We already know that $\phi$ restricts to a surjective map
$\Q[X_1,\dots,X_{d-1}]_{j}\to\A(d)_{j}$ for all $j\le d/2$; so it suffices to
check that, for $j\leq d/2$, the $\Q$-vector spaces $\Q[X_1,\dots,X_{d-1}]_{j}$ and $\A(d)_{j}$ have the same dimension.

The dimension of $\Q[X_1,\dots,X_{d-1}]_{j}$ is equal to the number of sequences $\ul=(\l_2,\l_3,\dots)$ satisfying the following condition
\begin{itemize}
 \item $N(\ul)=\sum_{i=2}^\infty (i-1)\l_i=j$.
\end{itemize}
Indeed, for each such $\ul$ we can form the monomial $X_1^{\l_2}\cdot\dots X_{d-1}^{\l_d}$, and these monomials
form a basis of the norm $j$ component of the polynomial algebra $\Q[X_1,\dots,X_{d-1}]$: here we use in particular the inequality $j\le d-1$.

On the other hand, the dimension of $\A(d)_{j}$ is equal to the cardinality of $\Lambda(j,d)$, i.e. the number of sequences $\ul=(\l_2,\l_3,\dots)$ satisfying the following conditions
\begin{itemize}
 \item $N(\ul)=\sum_{i=2}^\infty (i-1)\l_i=j$;\vspace{4pt}
 \item $\supp(\ul)=\sum_{i=2}^\infty i\l_i\leq d$.
\end{itemize}
We note now that for $j\leq d/2$ the second condition is implied by the first, since 
\[
 \sum_{i=2}^\infty i\l_i\leq \sum_{i=2}^\infty (2i-2)\l_i=2j\leq d.
\]
This concludes the proof of Proposition \ref{prop:algebraicindependence} (see also \cite[Remark 6.3]{LehnSorger}).
\end{proof}

A consequence of Proposition \ref{prop:algebraicindependence} is that any generating set for $\A(d)$ as a $\Q$-algebra must contain at least one element in each norm $1\le j\le d/2$; more precisely, Propositions \ref{prop:sufficientgenerators} and \ref{prop:algebraicindependence} together imply that the module of indecomposables $\A(d)_+/\A(d)_+^2$ has dimension 1 in each norm $1\le j\le d/2$, i.e. in each even degree $2\le 2j\le d$; of course, there are no indecomposables in odd degrees.

\subsection{A useful map of algebras}
We conclude the section by constructing, for $h\geq d\geq2$, a map of algebras $p^h_d\colon \A(h)\to\A(d)$. In this subsection
we use the expanded notation for the generators to avoid ambiguity:
given a sequence $\ul=(\l_2,\l_3,\dots)$ with $\supp(\ul)\leq d$,
we denote by $\fg_{\ul}(d)$ and $\fg_{\ul}(h)$ the corresponding basis elements in $\A(d)$ and $\A(h)$.
Hence, for instance,
\[
 \fg_{\ul}(d)=\sum_{\sigma\in(\ul)\subset\fS_d}[\sigma]\in\A(d).
\]
It is also convenient to introduce the notation $\fg_{\ul}(d)\in\A(d)$ for \emph{all} sequences
$\ul=(\l_2,\l_3,\dots)$, not only those satisfying $\supp(\ul)\leq d$. So we set
\[
 \fg_{\ul}(d):=0
\]
whenever $\supp(\ul)> d$. Note that, for any sequence $\ul$, if $\fg_{\ul}(h)=0$, then
$\supp(\ul)>h$, and therefore we also have $\supp(\ul)>d$, which implies
$\fg_{\ul}(d)=0$.

We define the map $p^h_d$ as a map of $\Q$-vector spaces, by sending the basis element
$\fg_{\ul}(h)$ to the element $\fg_{\ul}(d)$; note that $\fg_{\ul}(d)$ is always either zero or
a basis element of $\A(d)$.

We need to prove that $p^h_d$ is compatible with the multiplication.
For this let us introduce some notation.

 For all $k_1,k_2\ge0$ and for all $\ual=(\alpha_2,\alpha_3,\dots)\in\Lambda(k_1,d)$ and $\ube=(\beta_1,\beta_2,\dots)\in\Lambda(k_2,d)$, the product $\fg_{\ual}(d)\cdot\fg_{\ube}(d)\in\A(d)$ is a linear combination of the basis elements $\fg_{\uep}(d)$, for $\uep=(\epsilon_2,\epsilon_3,\dots)$ ranging in $\Lambda(k_1+k_2,d)$.
 
\begin{nota} %%%
\label{nota:productcoefficient}
For $\ual,\ube,\uep$
satisfying $\max(\supp(\ual),\supp(\ube),\supp(\uep))\le d$ and $N(\ual)+N(\ube)=N(\uep)$
we define the coefficient
$\theta_d(\uep;\ual,\ube)\in\Z_{\ge0}$ 
so that the following formula holds
\[
  \fg_{\ual}\fg_{\ube}=\sum_{\uep}\theta_d(\uep;\ual,\ube)\,\fg_{\uep}.
\]
\end{nota}
The coefficient
$\theta_d(\uep;\ual,\ube)$
has a combinatorial interpretation: given any permutation
$\rho\in(\uep)\subset\fS_d$, we have that $\theta_d(\uep;\ual,\ube)$ is the number of factorisations
$\rho=\sigma\cdot\tau$ with $\sigma\in(\ual)\subset\fS_d$ and $\tau\in(\ube)\subset\fS_d$. This follows
from the following remarks.
\begin{itemize}
 \item If we multiply $\fg_{\ual}(d)\cdot\fg_{\ube}(d)$ inside $\grad^N\Q[\fS_d]$, we will obtain
 the term $[\rho]$ as many times as there are factorisations $\rho=\sigma\tau$ as above.
 \item Conversely, if $\sigma\in(\ual)\subset\fS_d$ and $\tau\in(\ube)\subset\fS_d$, then the product
 $[\sigma]\cdot[\tau]$, which is one of the summands when computing $\fg_{\ual}(d)\cdot\fg_{\ube}(d)$,
 will either be zero, or equal to $[\rho]=[\sigma\tau]$ with $\rho=\sigma\tau\in\fS_d$ being a permutation of norm
 $N(\ual)+N(\ube)$.
\end{itemize}
\begin{lemma}
\label{lem:phd}
Let $h\ge d$, let $k_1,k_2\ge0$, and let $\uep\in\Lambda(k_1+k_2,d)\subset\Lambda(k_1+k_2,h)$, $\ual\in\Lambda(k_1,d)\subset\Lambda(k_1,h)$ and $\ube\in\Lambda(k_2,d)\subset\Lambda(k_2,h)$. Then
\[
\theta_d(\uep;\ual,\ube)=\theta_h(\uep;\ual,\ube).
\]
\end{lemma}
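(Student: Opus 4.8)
The plan is to use the combinatorial interpretation of $\theta_d(\uep;\ual,\ube)$ that was just established: this quantity counts the factorisations $\rho=\sigma\tau$ of a fixed permutation $\rho\in(\uep)\subset\fS_d$, with $\sigma\in(\ual)$ and $\tau\in(\ube)$. The key observation is that this count depends only on the local structure of $\rho$ near its support, and not on the ambient $d$ (or $h$), because any valid factorisation must have $\sigma$ and $\tau$ supported within the support of $\rho$. So the essential content is that enlarging the ground set from $\{1,\dots,d\}$ to $\{1,\dots,h\}$ cannot create or destroy factorisations once we have fixed a representative $\rho$.

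Concretely, I would first fix a permutation $\rho\in(\uep)\subset\fS_d$ and, since $\Lambda(k_1+k_2,d)\subset\Lambda(k_1+k_2,h)$, view the same $\rho$ as a permutation in $\fS_h$ via the standard inclusion $\fS_d\hookrightarrow\fS_h$ (extending by fixing $d+1,\dots,h$); note this inclusion preserves cycle type, so $\rho$ still lies in $(\uep)\subset\fS_h$. Then $\theta_d(\uep;\ual,\ube)$ counts factorisations $\rho=\sigma\tau$ inside $\fS_d$ and $\theta_h(\uep;\ual,\ube)$ counts them inside $\fS_h$; I claim these two sets of factorisations are in bijection. The core claim, which I would isolate as the heart of the argument, is that in any factorisation $\rho=\sigma\tau$ with $\sigma\in(\ual)$ and $\tau\in(\ube)$ and $N(\sigma)+N(\tau)=N(\rho)$, both $\sigma$ and $\tau$ have support contained in $\supp(\rho)\subseteq\{1,\dots,d\}$.

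The main obstacle is precisely this support-containment claim, and I would prove it using Corollary \ref{cor:geodesicpair}. Since $N(\rho)=N(\sigma\tau)=N(\sigma)+N(\tau)$, the corollary (applied with the roles suitably arranged) tells us that each cycle of $\sigma$ is contained in a cycle of $\rho=\sigma\tau$; in particular every non-fixed point of $\sigma$ is a non-fixed point of $\rho$, so $\supp(\sigma)\subseteq\supp(\rho)$. The analogous statement for $\tau$ follows either by the symmetric version of the corollary or by writing $\tau=\sigma^{-1}\rho$ and arguing that $\supp(\tau)\subseteq\supp(\sigma)\cup\supp(\rho)=\supp(\rho)$. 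Once both supports lie in $\{1,\dots,d\}$, any factorisation valid in $\fS_h$ already takes place entirely within $\fS_d$, and conversely every factorisation in $\fS_d$ is trivially one in $\fS_h$. This gives the desired bijection and hence the equality of the two counts. The remaining verifications are routine: that the bijection respects the conjugacy-class constraints $\sigma\in(\ual)$, $\tau\in(\ube)$ is immediate since the inclusion $\fS_d\hookrightarrow\fS_h$ preserves cycle types for permutations supported in $\{1,\dots,d\}$.
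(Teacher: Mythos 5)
Your proof is correct and follows essentially the same route as the paper: fix a representative $\rho$ supported in $\{1,\dots,d\}$ and use Corollary \ref{cor:geodesicpair} to show that in any norm-additive factorisation $\rho=\sigma\tau$ both factors fix $\{d+1,\dots,h\}$ pointwise, so the factorisation counts in $\fS_d$ and $\fS_h$ agree. Your extra care in deducing $\supp(\tau)\subseteq\supp(\rho)$ from $\tau=\sigma^{-1}\rho$ is a nice touch that the paper glosses over.
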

\begin{proof}
Let $\rho\in(\uep)\subset\fS_h$,
and for simplicity, using the fact that $\supp(\rho)\leq d$, assume that $\rho$ is a permutation
that fixes pointwise the set $\set{d+1,\dots,h}\subset\set{1,\dots,h}$.
Then by the above combinatorial interpretation,
$\theta_h(\uep;\ual,\ube)$ is equal to the number of factorisations
$\rho=\sigma\cdot\tau$ with $\sigma\in(\ual)\subset\fS_h$ and $\tau\in(\ube)\subset\fS_h$.
In particular $N(\rho)=N(\sigma)+N(\tau)$, hence
by Lemma \ref{cor:geodesicpair} each cycle of $\sigma$ or of $\tau$ is contained in a cycle
of $\rho$. This imples that no cycle of $\sigma$ or $\tau$ of length $\ge2$ can contain some
element in $\set{d+1,\dots,h}$, as these elements are fixpoints of $\rho$, i.e. they belong to cycles
of length 1. In other words, both $\sigma$ and $\tau$ fix $\set{d+1,\dots,h}$ pointwise.
\end{proof}
\begin{nota}
\label{nota:productcoefficientbis}
For $\ual,\ube,\uep$ satisfying $N(\ual)+N(\ube)=N(\uep)$
we denote by $\theta(\uep;\ual,\ube)$ the common value of the expressions $\theta_d(\uep;\ual,\ube)$ for all
$d\ge\max(\supp(\ual),\supp(\ube),\supp(\uep))$.
\end{nota}

\begin{proposition}
\label{prop:phd}
The map $p^h_d$ is a map of $\Q$-algebras.
\end{proposition}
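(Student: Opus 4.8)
The plan is to show that $p^h_d$ respects products on basis elements, i.e.\ that for all $\ual,\ube$ with $\supp(\ual),\supp(\ube)\le d$ we have $p^h_d(\fg_{\ual}(h)\cdot\fg_{\ube}(h))=\fg_{\ual}(d)\cdot\fg_{\ube}(d)$. Since $p^h_d$ is already defined as a linear map on the basis $\set{\fg_{\ul}(h)}$, and since it is enough to check multiplicativity on a basis, this is the only thing to verify. The key observation is that everything is governed by the structure constants $\theta_d(\uep;\ual,\ube)$ introduced in Notation \ref{nota:productcoefficient}, and that these are independent of the ambient symmetric group by Lemma \ref{lem:phd}.

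First I would fix $\ual\in\Lambda(k_1,d)$ and $\ube\in\Lambda(k_2,d)$ and expand the left-hand side using Notation \ref{nota:productcoefficient} applied in $\A(h)$: we have
\[
\fg_{\ual}(h)\cdot\fg_{\ube}(h)=\sum_{\uep}\theta_h(\uep;\ual,\ube)\,\fg_{\uep}(h),
\]
where $\uep$ ranges over $\Lambda(k_1+k_2,h)$. Applying the linear map $p^h_d$ termwise sends each $\fg_{\uep}(h)$ to $\fg_{\uep}(d)$, so the left-hand side becomes $\sum_{\uep}\theta_h(\uep;\ual,\ube)\,\fg_{\uep}(d)$. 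The right-hand side, again by Notation \ref{nota:productcoefficient} but now inside $\A(d)$, equals $\sum_{\uep}\theta_d(\uep;\ual,\ube)\,\fg_{\uep}(d)$, where $\uep$ now ranges only over $\Lambda(k_1+k_2,d)$.

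The remaining point is to reconcile the two sums, and here I would use the sequence-index bookkeeping carefully. If $\uep\in\Lambda(k_1+k_2,h)$ satisfies $\supp(\uep)\le d$, then $\uep\in\Lambda(k_1+k_2,d)$ and Lemma \ref{lem:phd} gives $\theta_h(\uep;\ual,\ube)=\theta_d(\uep;\ual,\ube)$, so these terms match. If instead $\supp(\uep)>d$, then by the defining convention $\fg_{\uep}(d)=0$, so the term drops out of the left-hand sum and contributes nothing; on the right-hand side such $\uep$ never appears since the index set is restricted to $\Lambda(k_1+k_2,d)$. Thus both sums reduce to $\sum_{\uep\in\Lambda(k_1+k_2,d)}\theta_d(\uep;\ual,\ube)\,\fg_{\uep}(d)$, establishing the equality. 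Since $p^h_d$ is $\Q$-linear and preserves products on a basis, and since $p^h_d$ evidently sends the unit $\fg_{\uz}(h)$ to $\fg_{\uz}(d)$, it is a map of $\Q$-algebras.

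I expect the main subtlety to be the matching of index sets rather than any hard computation: one must be scrupulous about the fact that $\fg_{\uep}(d)$ is defined to vanish when $\supp(\uep)>d$, so that the extra terms present in the $\A(h)$-expansion disappear under $p^h_d$, and about the invocation of Lemma \ref{lem:phd}, whose hypotheses ($\uep\in\Lambda(k_1+k_2,d)$, $\ual\in\Lambda(k_1,d)$, $\ube\in\Lambda(k_2,d)$) are exactly met for the surviving terms. No genuine obstacle arises here, since all the analytic work is already packaged into Lemma \ref{lem:phd}.
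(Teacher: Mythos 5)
Your argument for the case $\supp(\ual),\supp(\ube)\le d$ is exactly the paper's: expand the product in $\A(h)$ via the structure constants, apply $p^h_d$ termwise, discard the terms with $\supp(\uep)>d$ (which die by the convention $\fg_{\uep}(d)=0$), and identify the surviving coefficients with those of $\A(d)$ via Lemma \ref{lem:phd}. That part is correct.

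However, there is a genuine gap: you assert that checking multiplicativity on pairs with $\supp(\ual),\supp(\ube)\le d$ ``is the only thing to verify,'' but the basis of $\A(h)$ consists of all $\fg_{\ul}(h)$ with $\supp(\ul)\le h$, so you must also handle pairs where (say) $d<\supp(\ual)\le h$. In that case the right-hand side $p^h_d(\fg_{\ual}(h))\cdot p^h_d(\fg_{\ube}(h))$ is zero by definition of $p^h_d$, and one must separately show the left-hand side vanishes too. This does not follow from your index bookkeeping alone; it needs the observation that if $\theta_h(\uep;\ual,\ube)\neq 0$ then any factorisation $\rho=\sigma\tau$ witnessing it satisfies $N(\rho)=N(\sigma)+N(\tau)$, so by Corollary \ref{cor:geodesicpair} every cycle of $\sigma$ sits inside a cycle of $\rho$, whence $\supp(\uep)\ge\supp(\ual)>d$ and the term $\fg_{\uep}(h)$ is killed by $p^h_d$. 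The paper records precisely this remark at the end of its proof. The repair is short, but without it the proof as written does not establish multiplicativity on all of $\A(h)$.
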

\begin{proof}
Let $\ual$ and $\ube$ satisfy $\supp(\ual),\supp(\ube)\le h$, and denote $k_1=N(\ual)$ and $k_2=N(\ube)$. We have
\[
\begin{split}
 p^h_d(\fg_{\ual}(h)\cdot\fg_{\ube}(h))
 &= p^h_d\pa{\sum_{\uep\in\Lambda(k_1+k_2,h)}\theta(\uep;\ual,\ube)\fg_{\uep}(h)}\\
 &= \sum_{\uep\in\Lambda(k_1+k_2,h)}\theta(\uep;\ual,\ube)p^h_d(\fg_{\uep}(h))\\
 &= \sum_{\uep\in\Lambda(k_1+k_2,d)}\theta(\uep;\ual,\ube)\fg_{\uep}(d)\\
 &=\fg_{\ual}(d)\cdot\fg_{\ube}(d)\\
 &=p^h_d(\fg_{\ual}(h))\cdot p^h_d(\fg_{\ube}(h)).
\end{split}
\]
Note in particular that if either inequality $\supp(\ual)>d$ or $\supp(\ube)>d$ holds, then $\fg_{\ual}(h)\cdot \fg_{\ube}(h)$ is a linear combination of generators $\fg_{\uep}(h)$ with $\supp(\uep)>d$, and we then have
$p^h_d(\fg_{\ual}(h)\cdot\fg_{\ube}(h))=0=p^h_d(\fg_{\ual}(h))\cdot p^h_d(\fg_{\ube}(h))$.
\end{proof}

\section{A smaller set of generators}
\label{sec:minimalgenerators}
Let again $m:=\floor{d/2}$. In this section we prove that the elements $\gamma_{2}(d),\dots,\gamma_{m+1}(d)$ suffice to generate $\A(d)$.
Set $h:=2d$ throughout the section and consider the map of $\Q$-algebras $p^h_d\colon \A(h)\to\A(d)$: it is surjective, hence any set of generators of $\A(h)$ maps along $p^h_d$ to a set of generators of $\A(d)$.

We can refine this observation as follows: since $\A(d)$ is concentrated
in norms $\leq d-1$ (indeed any permutation in $\fS_d$
has norm at most $d-1$), whenever $x_1,\dots,x_r\in\A(h)_{\le d-1}$ are elements that suffice to generate multiplicatively $\A(h)_{\le d-1}$, then $p^h_d(x_1),\dots,p^h_d(x_r)$ generate $\A(d)$ as a $\Q$-algebra.

Recall from Subsection \ref{subsec:notation} that for $1\leq i\leq d=h/2$ we have elements $\delta_{i}(h)\in\A(h)_i$; we will prove the following theorem.
\begin{proposition}
\label{prop:deltagenerators}
The elements $\delta_{1}(h),\delta_{2}(h),\dots,\delta_{d-1}(h)$ generate
$\A(h)_{\le d-1}$.
\end{proposition}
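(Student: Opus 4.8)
The plan is to work entirely inside $\A(h)$ with $h=2d$ and to prove that, for every $j$ with $1\le j\le d-1$, the norm-$j$ monomials in $\delta_1(h),\dots,\delta_{d-1}(h)$ span $\A(h)_{j}$; since each of these generators lies in $\A(h)_{\le d-1}$, this is exactly the assertion that they generate $\A(h)_{\le d-1}$. First I would do the bookkeeping. The monomials $\prod_a \delta_a^{p_a}$ of norm $j$ (so $\sum_a a\,p_a=j$, all parts $\le d-1$, which is automatic as $j\le d-1$) are indexed by partitions of $j$, so there are $p(j)$ of them, where $p(j)$ is the number of partitions of $j$. On the other hand $\A(h)_{j}$ has the basis $\set{\fg_{\ul}:\ul\in\Lambda(j,h)}$, and for $j\le d-1$ the support condition is free, since $\supp(\ul)\le 2j\le 2d-2<h$; hence $\dim_\Q\A(h)_j=p(j)$ as well. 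So it suffices to prove that the $\delta$-monomials of norm $j$ are \emph{linearly independent}.

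For the comparison I bring in the single-cycle classes $\gamma_{a+1}(h)=\fg_{\one_{a+1}}(h)\in\A(h)_a$. Applying Proposition \ref{prop:algebraicindependence} to $\A(h)$ (whose range $j\le h/2=d$ covers all $j\le d-1$) shows that in norms $\le d$ the algebra $\A(h)$ is the free graded-commutative algebra $\Q[\gamma_2,\gamma_3,\dots]$, so the $\gamma$-monomials form a basis of each $\A(h)_j$ with $j\le d$. Consequently, in this range the filtration of $\A(h)$ by the powers $\A(h)_+\supseteq\A(h)_+^2\supseteq\cdots$ of the augmentation ideal has associated graded equal to the same polynomial algebra: in norm $j$ the space $\A(h)_+^k/\A(h)_+^{k+1}$ has as a basis the images of the $\gamma$-monomials with exactly $k$ factors, and these remain linearly independent.

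The heart of the argument is the claim that, for $1\le a\le d-1$,
\[
\delta_a(h)\ \equiv\ c_a\,\gamma_{a+1}(h)\pmod{\A(h)_+^2}\qquad\text{with } c_a\in\Q\setminus\set 0
\]
(for $a=1$ this is the equality $\delta_1=\gamma_2$, so $c_1=1$). Since the module of indecomposables $\A(h)_a/\A(h)_+^2$ is one-dimensional for $a\le d$, spanned by the class of $\gamma_{a+1}$ (as noted after Propositions \ref{prop:sufficientgenerators} and \ref{prop:algebraicindependence}), the only content of the claim is the non-vanishing $c_a\ne 0$, i.e.\ that $\delta_a$ is \emph{indecomposable}. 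Granting it, let $M=\prod_a\delta_a^{p_a}$ be a $\delta$-monomial of norm $j$ with $\ell=\sum_a p_a$ factors, and let $M'=\prod_a\gamma_{a+1}^{p_a}$ be the $\gamma$-monomial with the same exponents; expanding each factor $\delta_a=c_a\gamma_{a+1}+(\text{terms in }\A(h)_+^2)$ gives $M\equiv\big(\prod_a c_a^{p_a}\big)\,M'\pmod{\A(h)_+^{\ell+1}}$. Thus in the associated graded each $M$ is a \emph{non-zero} scalar multiple of $M'$; as the $M'$ are linearly independent there (by the previous paragraph), so are the $M$, and hence the $\delta$-monomials of norm $j$ are linearly independent in $\A(h)_j$. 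Together with the dimension count this shows they form a basis of $\A(h)_j$ for every $j\le d-1$, which is the proposition.

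The step I expect to be the real obstacle is the non-vanishing $c_a\ne0$. It cannot be seen in the support-associated graded $\tA(h)$ of Proposition \ref{prop:sufficientgenerators}: there $\tilde\delta_a=\tfrac1{a!}\tilde\delta_1^{\,a}$ is manifestly decomposable, so indecomposability of $\delta_a$ is entirely a lower-support effect, invisible to that filtration. To pin down $c_a$ I would expand $\delta_a$ in the $\gamma$-monomial basis and read off the coefficient of the linear term $\gamma_{a+1}$; concretely this is a count of reduced factorisations of permutations into transpositions, governed by the structure constants $\theta(\uep;\ual,\ube)$. For instance $\delta_1^{\,2}=2\delta_2+3\gamma_3$ gives $\delta_2\equiv-\tfrac32\gamma_3$, so $c_2=-\tfrac32$, and similarly $c_3=\tfrac{10}{3}$. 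Each $c_a$ is a ratio of positive integers, hence non-zero over $\Q$; this is exactly where the hypothesis that the ground field is $\Q$ is indispensable, since the numerators and denominators occurring force small characteristics to fail — already $c_2=-\tfrac32$ vanishes modulo $3$, so there $\delta_2$ would be decomposable and the argument would break.
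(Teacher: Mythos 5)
Your argument contains a genuine gap, and it sits exactly where you suspect it does. The reduction you carry out — dimension count via partitions, the observation that in norms $\le d$ the algebra $\A(h)$ is polynomial on the $\gamma_i$'s by Proposition \ref{prop:algebraicindependence}, and the passage to the associated graded of the augmentation filtration — is correct, but it only reformulates the proposition rather than proving it. Since the indecomposables of $\A(h)$ are one-dimensional in each norm $1\le a\le d$, the statement ``$c_a\ne0$ for all $1\le a\le d-1$'' (i.e.\ each $\delta_a$ is indecomposable) is \emph{equivalent} to the proposition: if some $\delta_a$ were decomposable, the $\delta$'s could not hit the indecomposable in norm $a$, and conversely your associated-graded argument shows that non-vanishing of all $c_a$ suffices. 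So the entire content of the proposition is concentrated in the non-vanishing claim, and for that you offer only the computed values $c_1=1$, $c_2=-\tfrac32$, $c_3=\tfrac{10}{3}$ and the assertion that ``each $c_a$ is a ratio of positive integers'' — which is not argued (and is already belied in sign by $c_2<0$; nothing you say rules out the numerator vanishing for some larger $a$).

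The paper's proof is, in essence, a proof of precisely this non-vanishing, and it is the technical heart of the section: one expands the products $y_s=\delta_s\cdot\gamma_{j+1-s}$ explicitly in the basis elements $\fg_{2^r,j+1-r}$ (Lemma \ref{lem:ysexpansion}, a nontrivial count of factorisations controlled by choices of non-consecutive elements on a cycle), and then shows that the resulting $j\times j$ coefficient matrix is invertible by reducing it to a Pascal-type matrix whose determinant is a Catalan number (Lemma \ref{lem:pascalarray}). Inverting that matrix is what expresses $\gamma_{j+1}$ in terms of $\delta_j$ and lower-norm data, which is exactly the statement $c_j\ne0$ in your language. To complete your write-up you would need an argument of comparable substance — for instance, an explicit closed formula for $c_a$ with a visibly non-zero value — and none is supplied.
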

Suppose for a moment that Proposition \ref{prop:deltagenerators} holds. Then the images
\[
p^h_d(\delta_{1}(h)),p^h_d(\delta_{2}(h)),\dots,p^h_d(\delta_{d-1}(h))
\]
generate $\A(d)$ by the previous
remark. Note however that roughly half of these elements of $\A(d)$ are equal to zero:
more precisely we have $p^h_d(\delta_{i}(h))=\delta_{i}(d)$ for $1\leq i\leq m=\lfloor d/2\rfloor$,
and $p^h_d(\delta_{i}(h))=0$ for $i\geq m+1$.
So actually the elements $\delta_{1}(d),\delta_{2}(d),\dots,\delta_{m}(d)$
must suffice to generate $\A(d)$.

The elements $\delta_{1}(d),\delta_{2}(d),\dots,\delta_{m}(d)$
have norms $1,2,\dots,m$, and we know that we can generate them using
the set of generators $\gamma_{2}(d),\dots,\gamma_{d}(d)$ of $\A(d)$.
However we cannot use any generator $\gamma_{i}(d)$ of norm $i-1>m$ 
in order to generate the elements $\delta_{1}(d),\delta_{2}(d),\dots,\delta_{m}(d)$.
Hence the generators $\gamma_{2}(d),\dots,\gamma_{m+1}(d)\in\A(d)$ must suffice
to generate the elements $\delta_{1}(d),\delta_{2}(d),\dots,\delta_{m}(d)$,
which in turn suffice to generate the entire $\A(d)$.
This would conclude the claim at the beginning of the section. Using the discussion at the end of Subsection \ref{subsec:algebraicindependence}, we have completed the proof of Theorem \ref{thm:minimalgenerators}, assuming Proposition \ref{prop:deltagenerators}.

As a corollary of Proposition \ref{prop:deltagenerators}, we can also establish when the first minimal relations will occur.

\begin{corollary}\label{cor:firstminrel}
For $d=2m\geq 2$ there is a single smallest norm minimal relation in norm $m+1$. For $d=2m+1\geq 5$ there are two smallest norm minimal relations in norm $m+2$.
\end{corollary}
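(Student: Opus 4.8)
The plan is to extract the lowest-norm relations directly from the Hilbert series of the two sides of the surjection $\phi\colon\Q[X_1,\dots,X_m]\to\A(d)$, where $m=\floor{d/2}$ and $X_j$ sits in norm $j$. By Proposition \ref{prop:deltagenerators} and the ensuing proof of Theorem \ref{thm:minimalgenerators}, the algebra $\A(d)$ is generated by exactly $m$ elements, one in each norm $1,\dots,m$, so $\phi$ is the correct free cover and its kernel $I_d$ contains all relations. The first point to record is that in the \emph{lowest} norm $n_0$ for which $(I_d)_{n_0}\neq0$, the decomposable relations vanish: any element of $\Q[X_1,\dots,X_m]_+\cdot I_d$ is a sum of products of a positive-norm polynomial with an element of $I_d$, hence lives in norm strictly above $n_0$. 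Consequently the indecomposable relations in norm $n_0$ coincide with $(I_d)_{n_0}$, and since $\phi$ is a graded surjection their number equals
\[
\dim(I_d)_{n_0}=\dim\Q[X_1,\dots,X_m]_{n_0}-\dim\A(d)_{n_0}.
\]

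First I would rewrite both Hilbert functions as partition counts. On one hand, $\dim\Q[X_1,\dots,X_m]_n$ is the number of partitions of $n$ into parts of size $\le m$, i.e.\ with largest part $\le m$. On the other hand, exactly as in the proof of Proposition \ref{prop:algebraicindependence}, $\dim\A(d)_n=|\Lambda(n,d)|$; associating to $\ul\in\Lambda(n,d)$ the partition of $n$ with $\l_i$ parts equal to $i-1$ turns the support bound $\supp(\ul)\le d$ into the requirement that this partition have at most $d-n$ parts, equivalently (by conjugating partitions) largest part $\le d-n$. A short check using $m=\floor{d/2}$ shows that no partition of $n$ has largest part in the interval $(m,\,d-n]$; hence $\dim\A(d)_n\le\dim\Q[X_1,\dots,X_m]_n$, and the difference is exactly the number of partitions of $n$ whose largest part lies in $(d-n,\,m]$.

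This interval is nonempty exactly when $d-n<m$, i.e.\ $n\ge d-m+1$, which pins down $n_0=d-m+1$; concretely this is $m+1$ for $d=2m$ and $m+2$ for $d=2m+1$. At $n_0=d-m+1$ one has $d-n_0=m-1$, so the interval collapses to $\set{m}$ and the number of minimal relations equals the number of partitions of $d-m+1$ with largest part exactly $m$, equivalently the number of partitions of $d-2m+1$ into parts of size $\le m$. For $d=2m$ this is a partition of $1$, giving the single relation arising from $(m,1)$; for $d=2m+1$ it is a partition of $2$, giving the two relations arising from $(m,2)$ and $(m,1,1)$, where the hypothesis $d\ge5$ (that is, $m\ge2$) is exactly what guarantees that a part of size $2$ is admissible.

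I expect the only conceptually delicate point to be the bottom-norm identification of minimal relations with the entire kernel $(I_d)_{n_0}$; once this is in place the argument reduces to the partition bookkeeping above, which is routine. The remaining care is purely organisational: keeping the even and odd parities separate and excluding the degenerate case $d=3$, where $m=1$ forbids a part of size $2$, so that a single relation (namely $X_1^3$) occurs instead of two.
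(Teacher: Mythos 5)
Your argument is correct, and its skeleton is the same as the paper's: in the lowest norm $n_0$ where $I_d$ is nonzero the decomposables $\Q[X_1,\dots,X_m]_+\cdot I_d$ contribute nothing, so the minimal relations there are all of $(I_d)_{n_0}$, whose dimension is the difference of the two Hilbert functions. Where you genuinely diverge is in how that difference is counted. The paper fixes the relevant norm ($m+1$ or $m+2$), introduces the total number $k$ of sequences $\ul$ with $N(\ul)$ equal to that norm, and computes each dimension as $k$ minus an explicitly enumerated handful of exceptional sequences, doing the even and odd cases (and, for $d$ odd, the preliminary vanishing in norm $m+1$) by separate ad hoc enumerations. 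You instead convert both Hilbert functions into partition counts — parts $\le m$ on the polynomial side, and, via $\supp(\ul)=N(\ul)+\#\mathrm{parts}$ followed by conjugation, largest part $\le d-n$ on the $\A(d)$ side — and observe that no partition of $n$ has largest part in the gap $(m,d-n]$ because such a part $p$ would satisfy $2p\le n+(d-n)=d$. This yields in one stroke a closed form for $\dim(I_d)_n$ in \emph{every} norm $n$ (the number of partitions of $n$ with largest part in $(d-n,m]$), from which the location $n_0=d-m+1$ and the counts $1$ and $2$ fall out uniformly, with the $d=3$ degeneracy transparently explained. What your route buys is uniformity across parities and a slightly stronger byproduct (algebraic independence up to norm $d-m$ rather than $d/2$, consistent with Theorem \ref{thm:minimalrelationsbottom}); what the paper's route buys is that it never needs the conjugation trick and stays entirely inside the bookkeeping of the sequences $\ul$ already in play. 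Both are complete proofs.
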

\begin{proof}
Assume first that $d=2m$.  We are interested in the kernel $I_d$ of the surjective homomorphism of graded $\Q$-algebras $\phi: \Q[X_1,\dots,X_m] \to \A(d)$ given by $X_i\mapsto \delta_i(d)$: a set of minimal relations for $\A(d)$ is given by a graded basis of $I_d/\Q[X_1,\dots,X_m]_+\cdot I_d$. From Proposition \ref{prop:algebraicindependence} we know that $(I_d)_{\leq m}=0$, implying that there are no minimal relations in norm $\le m$. In other words, the graded $\Q$-vector space $I_d$ is concentrated in norms $\ge m+1$; since $\Q[X_1,\dots,X_m]_+$ is concentrated in norms $\ge1$, we obtain that
the graded $\Q$-vector space $\Q[X_1,\dots,X_m]_+\cdot I_d$ is concentrated in norms $\ge m+2$, and this implies that the quotient map $I_d\to I_d/\Q[X_1,\dots,X_m]_+\cdot I_d $ restricts to an isomorphism in norm $m+1$:
\[
(I_d)_{m+1}\overset{\cong}{\to} \pa{I_d/\Q[X_1,\dots,X_m]_+\cdot I_d}_{m+1}.
\]

We are therefore led to compute $\dim_{\Q}((I_d)_{m+1})$. We know that $\phi$ is surjective in norm $m+1$, so \[\dim_{\Q}((I_d)_{m+1}) = \dim_{\Q}(\Q[X_1,\dots X_m]_{m+1}) - \dim_{\Q}(\A(d)_{m+1}),
\]
and we claim that this difference is equal to $1$ for $m\geq 1$.

Let $k$ denote the number of sequences $\ul=(\lambda_2,\lambda_3,\dots)$ satisfying $N(\ul)=m+1$.
The dimension $\dim_{\Q}(\Q[X_1,\dots X_m]_{m+1})$ is equal to the number of monomials in the variables $X_1,\dots, X_m$ of norm $m+1$: this is in turn equal to the number of sequences
$\ul=(\lambda_2,\lambda_3,\dots)$ satisfying $N(\ul)=m+1$ and $\lambda_i=0$ for all $i\ge m+1$.
The condition $N(\ul)=m+1$ immediately implies $\lambda_i=0$ for all $i\ge m+2$, and there is exactly one sequence $\ul$, namely $\one_{m+2}$, satisfying $N(\ul)=m+1$ but $\lambda_{m+1}\ge1$.
We conclude that
$k=1+\dim_{\Q}(\Q[X_1,\dots X_m]_{m+1})$.

The dimension $\dim_{\Q}(\A(d)_{m+1})$ is equal to the number of sequences $\ul = (\lambda_2,\lambda_3,\dots)$ satisfying  $N(\ul)=m+1$ and $\supp(\ul) \leq d=2m$.
We claim that there are only two sequences $\ul$ satisfying $N(\ul)=m+1$ and $\supp(\ul) \geq d+1=2m+1$; the claim would imply the equality $k=2+\dim_{\Q}(\A(d)_{m+1})$.
To prove the claim, note that if a sequence $\ul$ satisfies $N(\ul)=m+1$ and $\supp(\ul) \geq 2m+1$, then it must also satisfy 
\begin{align*}
     \supp(\ul)=\sum_{i=2}^{\infty}i\lambda_i \leq \sum_{i=2}^{\infty}(2i-2)\lambda_i = 2N(\ul)=2m+2.
\end{align*}
If the last inequality is in fact an equality, we must have $i\lambda_i=(2i-2)\lambda_i$ for all $i\ge2$, and we deduce the equality $\ul= (m+1,0,0,\dots)$. 

If instead $\supp(\lambda)=2m+1$, we have
\begin{align*}
 1+\sum_{i=2}^{\infty}i\lambda_i =1+\supp(\ul)= 2m+2 = 2N(\ul)= \sum_{i=2}^{\infty}(2i-2)\lambda_i,
\end{align*}
and hence $\sum_{i=2}^\infty (i-2)\lambda_i=1$. This implies $\lambda_3=1$ and $\lambda_i=0$ for all $i\ge4$, so that $\lambda = (m-1,1,0\dots)$.

Putting together the equalities 
$k=1+\dim_{\Q}(\Q[X_1,\dots X_m]_{m+1})$ and 
$k=2+\dim_{\Q}(\A(d)_{m+1})$, we obtain the desired equality
$\dim_{\Q}((I_d)_{m+1})=2-1=1$.

Assume now that $d=2m+1$ with $m\geq 2$; by Proposition \ref{prop:algebraicindependence} we obtain again that
$I_d$ is concentrated in norms $\ge m+1$. Let now $k$ denote the number
of sequences $\ul$ with $N(\ul)=m+1$.
The dimension $\dim_{\Q}(\Q[X_1,\dots,X_m]_{m+1})$ is equal to the number of sequences $\ul$ with $N(\ul)=m+1$ and $\lambda_i=0$ for all $i\ge m+1$; as before, we note that the condition $N(\ul)=m+1$ already implies $\lambda_i=0$ for all $i\ge m+2$, and there is exactly one sequence $\ul$ satisfying $N(\ul)=m+1$ and $\lambda_{m+2}\ge1$, namely $\one_{m+2}$.
We therefore have $k=1+\dim_{\Q}(\Q[X_1,\dots,X_m]_{m+1})$.

Similarly, the dimension $\dim_{\Q}(\A(d)_{m+1})$ is equal to the number of sequences $\ul$ with $N(\ul)=m+1$ and $\supp(\ul)\le d=2m+1$; as before, the inequality $\supp(\ul)\le 2 N(\ul)$ implies that any sequence $\ul$ with $N(\ul)=m+1$ also satisfies 
$\supp(\ul)\le 2m+2$, and the only sequence $\ul$ for which  $N(\ul)=m+1$ and $\supp(\ul)= 2m+2$ is $\ul=(m+1,0,0,\dots)$. We therefore have
$k=1+\dim_{\Q}(\Q[X_1,\dots,X_m]_{2(m+1)})$, and comparing with the other equality proved above we conclude that
$\dim_{\Q}(\Q[X_1,\dots,X_m]_{m+1})=
\dim_{\Q}(\A(d)_{m+1})$. The map $\phi$ is surjective in norm $m+1$, and the last equality of dimensions implies that $\phi$ is also injective in norm $m+1$, i.e. $(I_d)_{m+1}=0$.

We have thus shown that $I_d$ is concentrated in norms $\ge m+2$; as in the even case we conclude that the quotient map $I_d\to I_d/\Q[X_1,\dots,X_m]_+\cdot I_d$ is an isomorphism in norm $m+2$.

We are therefore led to compute $\dim_{\Q}((I_d)_{m+2})$, which by surjectivity of $\phi$ in norm $m+2$ is equal to the difference $\dim_{\Q}(\Q[X_1,\dots X_m]_{m+2}) - \dim_{\Q}(\A(d)_{m+2})$.
We claim that this difference is equal to $2$ for $m\geq 2$.

Let $k'$ denote the number of sequences $\ul$ satisfying $N(\ul)=m+2$.
The dimension $\dim_{\Q}(\Q[X_1,\dots X_m]_{m+2})$ is equal to the number of sequences $\ul$ satisfying $N(\ul)=m+2$ and $\lambda_i=0$ for all $i\ge m+1$. The condition $N(\ul)=m+2$ immediately implies $\lambda_i=0$ for all $i\ge m+3$: hence, for a sequence $\ul$ satisfying $N(\ul)=m+2$, we also have $\lambda_i=0$ for all $i\ge m+1$ unless one of the following occurs:
\begin{itemize}
    \item $\lambda_{m+2}\ge1$, in which case the equality $\sum_{i=2}^\infty(i-1)\lambda_i=m+2$ implies $\lambda_{m+2}=1$ and $\ul=\one_{m+2}$;
    \item $\lambda_{m+2}=0$ and $\lambda_{m+1}\ge1$, in which case the equality $\sum_{i=2}^\infty(i-1)\lambda_i=m+2$, together with the hypothesis $m\ge2$ (and hence $(m+2-1)\cdot 2> m+2$), implies $\lambda_{m+1}=1$ and $\ul=\one_2+\one_{m+1}$.
\end{itemize}
We thus obtain the equality $k'=2+\dim_{\Q}(\Q[X_1,\dots X_m]_{m+2})$.

The dimension $\dim_{\Q}(\A(d)_{m+2})$ is equal to the number of sequences $\ul$ satisfying $N(\ul)=m+2$ and $\supp(\ul)\le d=2m+1$. Let $\ul$ be a sequence with
 $N(\ul)=m+2$ but $\supp(\ul)\ge 2m+2$;
 as before we have $\supp(\ul)\le 2N(\ul)=2m+4$, and we can compute $0\le 2N(\ul)-\supp(\ul)=\sum_{i=2}^\infty(i-2)\lambda_i\le 2$; we have three cases:
 \begin{itemize}
     \item $\sum_{i=2}^\infty(i-2)\lambda_i=0$: in this case we have $\ul=(m+2,0,0,\dots)$;
     \item $\sum_{i=2}^\infty(i-2)\lambda_i=1$: in this case we have $\ul=(m,1,0,0,\dots)$;
     \item $\sum_{i=2}^\infty(i-2)\lambda_i=2$: in this case either $\lambda_4=0$ and $\ul=(m-2,2,0,0,\dots)$, or $\lambda_4\ge1$ and $\ul=(m-2,0,1,0,0,\dots)$; note that both cases are possible as we assume $m\ge2$.
 \end{itemize}
We thus obtain the equality $k'=4+\dim_{\Q}(\A(d)_{m+2})$, which together with the previous equality yields $\dim_{\Q}(\Q[X_1,\dots X_m]_{m+2}) - \dim_{\Q}(\A(d)_{m+2})=4-2=2$, as claimed.
\end{proof}

Using computer calculations to obtain presentations of $\A(d)$ for $d\leq 10$, we are led to the conjecture that there are no minimal relations in norm greater than $d$ (see Table \ref{tab:reltable}). The best upper bound we can prove is that there are no minimal relations in norm $\ge\lfloor\frac{3d}{2}\rfloor=d+m$. To see this, let $k\ge \lfloor\frac {3d}2\rfloor$; since $k\ge d$ we have $\A(d)_{k}=0$, hence $(I_d)_{k}=\Q[X_1,\dots,X_m]_{k}$, and we want to prove that we also have
\[
\pa{I_d\cdot\Q[X_1,\dots,X_m]_+}_{k}=\Q[X_1,\dots,X_m]_{k}.
\]
To see this, take a monomial $\prod_{i=1}^{m}X_i^{k_i}$ in norm $k$, with $k_i\geq 0$.

Choose the least $j$ such that $k_j\geq 1$, and factor the corresponding $X_j$ out of the product.
The norm of the monomial $\prod_{i=1}^{m}X_i^{k_i}/X_j$ is at least $k-m\ge d$, and hence the monomial $\prod_{i=1}^{m}X_i^{k_i}/X_j$ lies in $I_d$. We can thus exhibit our original monomial $\prod_{i=1}^{m}X_i^{k_i}$ as a product of an element $\prod_{i=1}^{m}X_i^{k_i}/X_j$ in $I_d$ and an element $X_j$ in $\Q[X_1,\dots,X_m]_+$.
\subsection{Strategy of proof of Proposition \ref{prop:deltagenerators}}
In the rest of the section we will only deal with the algebra $\A(h)$, so we will use again the short notation $\fg_{\ul}$ to denote $\fg_{\ul}(h)$.

We will prove by induction on $1\leq j\leq d-1$ the following statement: \emph{The elements
$\delta_{1},\delta_{2},\dots,\delta_{j}$ suffice to generate $\A(h)_{\le j}$.}

The case $j=1$ is easily solved: for $d\ge2$, $\A(h)_1$ has dimension 1 over $\Q$, with basis $\delta_{1}$.
The case $j=2$ can also be checked by hand: for $d\ge3$ we have $h=2d\ge6$; the equality $\delta_{1}^2=3\gamma_{3}+2\delta_{2}$, which is straightforward to check (see also Section \ref{sec:recursionformula}), ensures that $\delta_{1}$ and $\delta_{2}$ suffice
to generate multiplicatively the elements $1$, $\delta_{1}$, $\delta_{2}$ and $\gamma_{3}$, which form a basis of $\A(h)$ as a $\Q$-vector space in norms $\le 2$.

Suppose now by induction that
the elements $\delta_{1},\delta_{2},\dots,\delta_{j-1}$, having norms $\leq j-1$,
suffice to generate $\A(h)_{\le j-1}$, for some $j\ge2$.
It then suffices to prove that the element
$\gamma_{j+1}$ of norm $j$ can be generated using the element
$\delta_{j}$, which also has norm $j$, together with \emph{all} elements of norm $\leq j-1$ in $\A(h)$.

To make a more precise statement, for $1\leq s\leq j-1$ let $y_s$ be the product 
\[
y_s=\delta_{s}\cdot\gamma_{j+1-s}\in\A(h)_{j}.
\]
We also set $y_j=\delta_{j}$.
We will prove that $\gamma_{j+1}$ is a linear combination, over $\Q$, of the elements $y_1,\dots,y_j$.

We first express $y_s$ as a linear combination of some of the basis elements $\fg_{\ul}$.
For all $0\leq r\leq j-2$, let $\fg_{2^r,j+1-r}$ be the basis element of $\A(h)$
corresponding to the sequence
$\ul=(\l_2,\l_3,\dots)$ with $\l_2=r$, $\l_{j+1-r}=1$ and all other entries equal to 0. For $r=j-1$, we also set $\fg_{2^{j-1},2}:=j\delta_{j}\in\A(d)$.

For $0\le r\le j-2$, the element $\fg_{2^r,j+1-r}$ is the sum of all elements $[\sigma]\in\grad^N\Q[\fS_h]$
for $\sigma$ ranging among all permutations of $\fS_h$ with a cycle decomposition consisting
of $r$ transpositions, one additional $(j+1-r)$-cycle, and fixpoints. For $r=j-1$ we put an extra factor
$j$ to make the next formula look cleaner (and still be correct). Note that, for $r=0$, we have $\fg_{2^0,j+1-0}=\gamma_{j+1}$.

\begin{lemma}
\label{lem:ysexpansion}
For all $1\leq s\leq j-1$, we have the following formula for the expression
of $y_s$ as linear combination of the basis elements $\fg_{\ul}$:
\[
 y_s=
 \sum_{r=0}^{j-1}\frac{j+1-r}{j+1-s} \binom{j+1-s}{s-r}\fg_{2^r,j+1-r}.
\]
\end{lemma}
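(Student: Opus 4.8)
The plan is to read off the coefficients in Lemma~\ref{lem:ysexpansion} as structure constants of a product and then to compute those constants combinatorially. Writing $\ual$ for the cycle type of $\delta_s$ (namely $s$ disjoint transpositions) and $\ube$ for that of $\gamma_{j+1-s}$ (a single $(j+1-s)$-cycle), the definition of $y_s$ together with Notation~\ref{nota:productcoefficient} gives
\[
y_s=\delta_s\cdot\gamma_{j+1-s}=\sum_{\uep}\theta(\uep;\ual,\ube)\,\fg_{\uep},
\]
so it suffices to fix $\rho\in(\uep)\subset\fS_h$ and count the geodesic factorisations $\rho=\sigma\tau$ with $\sigma\in(\ual)$ and $\tau\in(\ube)$, exactly as in the combinatorial interpretation recorded after Notation~\ref{nota:productcoefficient}.

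First I would pin down which $\uep$ can occur. Since the factorisation is geodesic, Corollary~\ref{cor:geodesicpair}, applied to $\rho=\sigma\tau$ and (via inverses) to $\rho^{-1}=\tau^{-1}\sigma^{-1}$, shows that each transposition of $\sigma$ and the unique nontrivial cycle of $\tau$ lie inside a single cycle of $\rho$. In particular $\supp(\tau)$ is contained in one cycle $C$ of $\rho$. If $C'$ is any other nontrivial cycle of $\rho$, then every point of $C'$ is fixed by $\tau$, so $\rho$ and $\sigma$ agree on $C'$; as $\sigma$ is an involution this forces $|C'|\le 2$, i.e.\ $C'$ is a transposition. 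Hence $\uep$ must be of the form $2^r(j+1-r)$, with $C$ the long cycle of length $L:=j+1-r$. Moreover the two points of each transposition $C'$ of $\rho$ are fixpoints of $\tau$, so $\sigma$ must map them as $\rho$ does; thus all $r$ transpositions of $\rho$ are forced to belong to $\sigma$, and the remaining $k:=s-r$ transpositions of $\sigma$, together with $\tau$, are supported inside $C$. This reduces the problem to a single long cycle: $\theta(\uep;\ual,\ube)$ equals the number of geodesic factorisations of a fixed $L$-cycle $\rho_0$ as a product of $k$ disjoint transpositions times a single $(L-k)$-cycle.

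The remaining and decisive step is to evaluate this reduced count and to match it with $\tfrac{j+1-r}{j+1-s}\binom{j+1-s}{s-r}=\tfrac{L}{L-k}\binom{L-k}{k}$. I would set up a bijection between such factorisations and the $k$-edge matchings of the cycle graph on the $L$ points of $\rho_0$, whose edges join $\rho_0$-consecutive points. One direction is a direct check: multiplying $\rho_0$ on the left by the transposition of two $\rho_0$-adjacent points splices out one point and yields an $(L-1)$-cycle, and doing this for $k$ edges with pairwise disjoint endpoints (a matching) produces a single $(L-k)$-cycle. For the converse, Lemma~\ref{lem:geodesictransposition} forces each transposition factor to join two points in a common cycle; reconstructing $\rho_0$ from $\tau$ by the $k$ transpositions is a sequence of merges taking the $k+1$ cycles of $\tau$ on the $L$ points (its nontrivial cycle and the $k$ omitted fixed points) to a single cycle, and since the transpositions are vertex-disjoint this merging tree forces each of them to attach an omitted point to its $\rho_0$-neighbour, i.e.\ to be a consecutive pair, with the chosen edges forming a matching. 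Granting the bijection, the classical Kaplansky count of $k$-matchings in a cycle of length $L$ gives exactly $\tfrac{L}{L-k}\binom{L-k}{k}$, as required; the boundary case $r=j-1$, where $L=2$ degenerates, is checked directly and accounts for the convention $\fg_{2^{j-1},2}=j\delta_j$.

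I expect the main obstacle to be the converse direction of this bijection, namely showing that every valid $\sigma$ necessarily consists of consecutive-pair transpositions forming a matching. This is where the geodesic (norm-additivity) hypothesis and the disjointness of the transpositions must be combined carefully through the merging-tree analysis sketched above; once that structural fact is in hand, the rest of the argument is bookkeeping with supports and norms.
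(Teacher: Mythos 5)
Your proof is correct and follows essentially the same route as the paper's: both arguments use Corollary~\ref{cor:geodesicpair} to show that the $r$ transpositions of $\rho$ must reappear in $\sigma$ and that the remaining $s-r$ transpositions of $\sigma$ are pairs of consecutive points of the long cycle with pairwise disjoint supports, thereby reducing the coefficient of $\fg_{2^r,j+1-r}$ to the count of $(s-r)$-matchings (equivalently, sets of $s-r$ pairwise non-consecutive elements) in a cycle of length $j+1-r$, with the same separate treatment of the degenerate case $r=s=j-1$. The only real divergence is at the final enumeration step, where you invoke the classical Kaplansky formula $\frac{L}{L-k}\binom{L-k}{k}$ while the paper rederives it by a double count with a marked special element.
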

We will prove Lemma \ref{lem:ysexpansion} in Subsection \ref{subsec:ysexpansion}. For now, assume that it holds: then the $j$ elements
$(y_s)_{1\leq s\leq j}$ can be written as linear combinations of the $j$ elements
$(\fg_{2^r,j+1-r})_{0\leq r\leq j-1}$.

To prove the converse, i.e. that each element $\fg_{2^r,j+1-r}$ can be written as linear combination of the elements
$(y_s)_{1\leq s\leq j}$ (and in particular, for $r=0$, the element $\gamma_{j+1}$),
it suffices to check that the $(j\times j)$-matrix $A=(a_{r,s})_{0\leq r\leq j-1,1\le s\leq j}$
is invertible, where
\[
\begin{array}{rll}
a_{r,s}&=\frac{j+1-r}{j+1-s}\binom{j+1-s}{s-r} &\mbox{for } 0\leq r\leq j-1 \mbox{ and } 1\leq s\leq j-1,\\[5pt]
a_{r,j}&=0 &\mbox{for } 0\leq r\leq j-2,\\[5pt]
a_{j-1,j}&=\frac 1j. & 
\end{array}
\]
The values of $a_{r,j}$ for $0\leq r\leq j-1$ follow from the very definition of $y_j=\delta_{j}$
and $\fg_{2^{j-1},2}=j\delta_{j}$, hence $y_j=\frac 1j \fg_{2^{j-1},2}$.
Since $A$ has a column, the $j$\textsuperscript{th},  with only one entry different from $0$, we can check its invertibility
by passing to the corresponding minor of size $(j-1)\times(j-1)$, namely the matrix $B=(a_{r,s})_{0\leq r\leq j-2,1\leq s\leq j-1}$. We can now divide the $r$\textsuperscript{th} row of $B$ by $j+1-r$ and multiply the $s$\textsuperscript{th}
column of $B$ by $j+1-s$, to obtain the matrix $C=(c_{r,s})_{0\le r\le j-2,1\le s\le j-1}$, where
\[
    c_{r,s}=\binom{j+1-s}{s-r}.
\]
The fact that the matrix $C$ has a non-zero determinant will follow from applying Lemma~\ref{lem:pascalarray} to the mirrored matrix $\tilde C = (\tilde c_{k,l})_{1\leq k, l \leq j-1}$ where $\tilde c_{k,l} = c_{j-1-l,j-k} = \binom{k+1}{l+1-k}$; see Subsection \ref{subsec:pascalarray}.

\subsection{Proof of the Lemma \ref{lem:ysexpansion}}
\label{subsec:ysexpansion}
For all $1\leq s\leq j-1$ we have to prove the formula
\[
y_s=\delta_{s}\cdot\gamma_{j+1-s} = \sum_{r=0}^{j-1}\frac{j+1-r}{j+1-s} \binom{j+1-s}{s-r}\fg_{2^r,j+1-r}.
\]

First, we prove that $\delta_{s}\cdot\gamma_{j+1-s}$ is a linear combination of no other elements
$\fg_{\ul}$ than those of the form $\fg_{2^r,j+1-r}$. Let $\sigma,\tau\in\fS_h$ be permutations,
and assume that $\sigma$ has cycle decomposition with $s$ transpositions and $h-2s$ fixpoints, whereas
$\tau$ has a cycle decomposition with one $(j+1-s)$-cycle and $h-j-1+s$ fixpoints. Denote by
$\tr_1,\dots,\tr_s$
the transpositions occurring in $\sigma$, with $\tr_i=(n_in'_i)$ for distinct elements $n_1,n'_1,\dots,n_s,n'_s\in\set{1,\dots,h}$. Denote also by $c=(m_1\dots m_{j+1-s})$ the cycle of $\tau$.

Let $\rho=\sigma\tau$, and suppose that $N(\rho)=N(\sigma)+N(\tau)=j$. 
By Corollary \ref{cor:geodesicpair}, for all $1\leq i\leq s$, the transposition $\tr_i$ ``intersects''
the cycle $c$ in at most one element, i.e. at least one between $n_i$ and $n'_i$ is not contained
in $\set{m_1,\dots,m_{j+1-s}}$. We can assume without loss of generality that the transpositions
$\tr_1,\dots,\tr_t$ intersect $c$ in one element, and the transpositions $\tr_{t+1},\dots,\tr_s$
are disjoint from $c$, for some $0\leq t\leq s$.

We can multiply $\tau$ by the $s$ transpositions
constituting $\sigma$, one after the other, in order to obtain $\rho$ at the end; the order in which we multiply these transpositions is irrelevant,
as these transpositions commute with each other. The product $\tr_1\dots\tr_t\tau$ is a permutation
with one $(j+1-s+t)$-cycle and fixpoints. The cycle is supported on the set
$\set{m_1,\dots,m_{j+1-s}}\cup\set{n_1,n'_1,\dots,n_t,n'_t}$; in particular this cycle is disjoint
from all transpositions $\tr_{t+1},\dots,\tr_s$, and the conclusion is that $\rho$ is a permutation
with a cycle decomposition consisting of one $(j+1-s+t)$-cycle and $s-t$ transpositions, as wished: setting $r=s-t$, the element $[\rho]$ is one of the summands of $\fg_{2^r,j+1-r}$.

We notice two more phenomena. The first is that each transposition in the cycle decomposition
of $\rho$ is also a transposition $\tr_i$ in the cycle decomposition of $\sigma$, unless it coincides
with the cycle of $\tau$ (in which case we must have $s=j-1$, $t=0$ and thus $r=j-1$).

The second is that the elements $\set{m_1,\dots,m_{j+1-s}}\cup\set{n_1,n'_1,\dots,n_t,n'_t}$ composing
the \emph{long cycle} of $\rho$ occur in a controlled way in the cyclic order of this cycle: if we assume without
loss of generality that, for all $1\leq i\leq t$, the element $n_i$ belongs to $\set{m_1,\dots,m_{j+1-s}}$
and $n'_i$ does not, then the elements $n_i,n'_i$ occur consecutively in the long cycle of $\rho$.
Here by \emph{long cycle} we mean the cycle of $\rho$ containing the elements of the cycle of $\tau$. The conclusion
is that the elements $n_1,\dots,n_t$ are pairwise non-consecutive  in the cyclic order of the long cycle of $\rho$.

On the other hand, if $\rho$ is any permutation with a cycle decomposition consisting of 
$2^r$ transpositions and one $(j+1-r)$-cycle, with $(j+1-r)\ge3$, and if $n_1,\dots,n_t$ are non-consecutive elements
of this cycle, then we can define:
\begin{itemize}
    \item $\sigma$ to be the product of $r+t$ disjoint transpositions,
the first $r$ of which are the ones occurring in the cycle decomposition of $\rho$, the other $t$ of which have instead the form $(n_i,\rho^{-1}(n_i))$; 
 \item $\tau$ to be the permutation $\sigma\rho=\sigma^{-1}\rho$, which automatically contains only one non-trivial cycle, of length $(j+1-r-t)$.
\end{itemize}

We want now to solve the following quantitative problem. Let $1\leq s\leq j-1$, let $0\leq r\leq j-1$ and let
$\rho\in\fS_h$ be a permutation with a cycle decomposition consisting of
one $(j+1-r)$-cycle, $r$ transpositions and fixpoints; in the case $r=j-1$ we assume
that the cycle decomposition of $\rho$ has $j$ transpositions and fixpoints.
We want to compute the number $a_{r,s}$ of factorisations $\rho=\sigma\tau$ in $\fS_h$, with $\sigma$
consisting of $s$ transpositions and fixpoints, and $\tau$ consisting of a single $(j+1-s)$-cycle and fixpoints: the number 
$a_{r,s}$ is precisely the coefficient of $\fg_{2^r,j+1-r}$ in the formula for $\delta_{s}\cdot\gamma_{j+1-s}$,
as each such factorisation corresponds to a contribution $[\sigma]\cdot[\tau]=[\rho]$ in the computation
of $\fg_{2^s}\cdot\fg_{j+1-s}$.

By Corollary \ref{cor:geodesicpair}, for each such factorisation $\rho=\sigma\tau$,
the $(j+1-s)$-cycle of $\tau$ must be contained in a cycle of $\rho$: this
implies that $a_{r,s}=0$ whenever $r>s$, and this is compatible with the formula
$a_{r,s}=\frac{j+1-r}{j+1-s} \binom{j+1-s}{s-r}$, as the binomial coefficient vanishes whenever $r>s$.

From now on assume $0\le r\le s$.
We start with the case $r=s=j-1$, which is somehow exceptional. In this case $\rho$ consists
of $j$ transpositions, and we want to compute how many factorisations of the form $\rho=\sigma\tau$ exist,
with $\sigma$ consisting of $j-1$ transpositions and $\tau$ being a single transposition. The answer is $j$,
so that the coefficient of $\delta_{j}$ in $\delta_{s}\cdot\gamma_{j+1-s}$ is equal to $j$. If we plug this $j$
into the definition of $\fg_{2^{j-1},2}:=j\delta_j$, we obtain that the coefficient of
$\fg_{2^{j-1},2}$ in the product $\fg_{2^s}\cdot\fg_{j+1-s}$ is 1, and this is compatible
with the formula $a_{r,s}=\frac{j+1-r}{j+1-s} \binom{j+1-s}{s-r}$ also in this case.

From now on assume $0\le r\le j-2$. The permutation $\rho$ consists of
$r$ transpositions and one cycle of length $(j+1-r)\ge3$. If we write $\rho=\sigma\tau$ as above,
then the $r$ transpositions of $\rho$ should also be transpositions of $\sigma$, and the other
$s-r$ transpositions of $\sigma$ are uniquely determined by their intersections with the long
cycle of $\rho$. These intersections must be $s-r$ non-consecutive elements in a $(j+1-r)$-cycle,
so $a_{r,s}$ is equal to the number of ways to choos $s-r$ non-consecutive elements in a $(j+1-r)$-cycle.
For $r=s\le j-2$ we have $a_{r,s}=1$, and this is compatible with the formula
$a_{r,s}=\frac{j+1-r}{j+1-s} \binom{j+1-s}{s-r}$.

From now on we assume $0\le r<s$.
To compute $a_{r,s}$ we use a double counting argument: let $a'_{r,s}$ be the
number of ways to choose $s-r$ non-consecutive elements in a $(j+1-r)$-cycle, one of which is declared to
be the \emph{special element}.

On the one hand $a'_{r,s}=(s-r)a_{r,s}$, since we can first choose the $(s-r)$ non-consecutive elements, and then choose which of them is the special element.
On the other hand, we can first choose the special element (and we have $(j+1-r)$ ways to do it),
and then observe that any choice of the other $s-r-1$ elements determines a splitting of the number
$(j-r)-(s-r-1)=j-s+1$ as a sum of $s-r$ ordered, strictly positive integers. There are
$\binom{j-s}{s-r-1}$ ways to write $j-s+1$ as a sum of $s-r$ ordered, strictly positive integers,
and hence we get $a'_{r,s}=(j+1-r)\binom{j-s}{s-r-1}$.

Combining the two formulas we obtain also in the remaining cases
\[
a_{r,s}=\frac{j+1-r}{s-r}\binom{j-s}{s-r-1}=\frac{j+1-r}{j+1-s}\binom{j+1-s}{s-r}.
\]

\subsection{The lowest minimal relation in the even case}
Another consequence of Lemma~\ref{lem:ysexpansion} is the following mixed relation:
\begin{proposition}
    Let $m\ge1$; in $\A(2m)$ we have the following relation in norm $m+1$
    \begin{equation*}
        \sum_{s=1}^{m} (-1)^s s\cdot C_{m-s}\cdot \delta_s \cdot \gamma_{m+2-s} = 0.
    \end{equation*}
\end{proposition}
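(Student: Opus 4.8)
The plan is to expand the left-hand side in the basis $\{\fg_{\ul}\}$ of $\A(2m)$ and check that every coefficient vanishes; this reduces the statement to a numerical identity, which I would settle by a generating-function computation. First I obtain the needed product expansions in $\A(2m)$. In $\A(2(m+2))$ Lemma~\ref{lem:ysexpansion} applies with $j:=m+1$ (here $j=d-1$ for $d=m+2$) and gives $\delta_{s}\gamma_{m+2-s}=\sum_{r=0}^{m}a_{r,s}\,\fg_{2^r,m+2-r}$, where $a_{r,s}=\frac{m+2-r}{m+2-s}\binom{m+2-s}{s-r}$. Applying the algebra map $p^{2(m+2)}_{2m}$ of Proposition~\ref{prop:phd}, which fixes each surviving generator and annihilates every $\fg_{\ul}$ with $\supp(\ul)>2m$, and noting that $\supp(2^r,m+2-r)=m+r+2$, I obtain in $\A(2m)$
\[
\delta_{s}\gamma_{m+2-s}=\sum_{r=0}^{m-2}a_{r,s}\,\fg_{2^r,m+2-r},
\]
since the terms $r\in\{m-1,m\}$ are killed. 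Substituting into the claimed sum, the coefficient of $\fg_{2^r,m+2-r}$ for $0\le r\le m-2$ equals $\Psi_r:=\sum_{s=1}^{m}(-1)^s\,s\,C_{m-s}\,a_{r,s}$, and it remains to prove $\Psi_r=0$.

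Next I reparametrise. Writing $t:=s-r$ and $n:=m-r$ (so $2\le n\le m$), one checks $a_{r,s}=W_{n+2,t}$, where $W_{p,t}:=\frac{p}{p-t}\binom{p-t}{t}$ is the number of ways of choosing $t$ pairwise non-consecutive points on a $p$-cycle (exactly the quantity counted in Subsection~\ref{subsec:ysexpansion}). Splitting the weight $s=(m-n)+t$ then yields
\[
\Psi_r=(-1)^r\big[(m-n)A_n+B_n\big],\qquad A_n:=\sum_{t\ge0}(-1)^tC_{n-t}W_{n+2,t},\quad B_n:=\sum_{t\ge0}(-1)^t\,t\,C_{n-t}W_{n+2,t}.
\]
Thus it suffices to show $A_n=B_n=0$ for all $n\ge2$.

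I would prove both vanishings by computing the ordinary generating functions $\sum_nA_ny^n$ and $\sum_nB_ny^n$. Swapping the order of summation and setting $n=u+t$, the key algebraic step is the rewriting $W_{u+t+2,t}=\frac{u+t+2}{u+2}\binom{u+2}{t}=\binom{u+2}{t}+\binom{u+1}{t-1}$. Summing the resulting $t$-series with $\sum_t(-1)^t\binom{p}{t}y^t=(1-y)^p$ and its first derivative collapses the inner sum to $(1-2y)(1-y)^{u+1}$, leaving the Catalan generating function $C(z)=\frac{1-\sqrt{1-4z}}{2z}$ evaluated at $z=y(1-y)$. The decisive simplification is $1-4y(1-y)=(1-2y)^2$, whence $C(y(1-y))=\frac1{1-y}$ and $zC'(z)=\frac{y}{(1-y)(1-2y)}$. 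Feeding these in gives
\[
\sum_nA_ny^n=(1-2y)(1-y)\,C(y(1-y))=1-2y,\qquad \sum_nB_ny^n=-3y,
\]
so indeed $A_n=B_n=0$ for $n\ge2$ (the low-order terms $A_0,A_1,B_1$ being irrelevant). This forces $\Psi_r=0$ for $0\le r\le m-2$, hence the relation; the case $m=1$ is trivial because $\A(2)$ vanishes in norm $2$.

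The main obstacle is precisely this generating-function evaluation: organising the double sum so that the inner $t$-summation closes in powers of $(1-y)$, and above all recognising the collapse $1-4y(1-y)=(1-2y)^2$ that turns the Catalan series into a rational function. Everything else --- the expansion via Lemma~\ref{lem:ysexpansion}, the support-vanishing of the high-$r$ terms under $p^h_d$, and the reparametrisation --- is bookkeeping. As a reassuring first check I would verify the smallest case $m=2$ in $\A(4)$, where $\A(4)_3=\Q\,\gamma_4$: the relation reads $-\delta_1\gamma_3+2\delta_2\gamma_2=0$, i.e.\ $-4\gamma_4+4\gamma_4=0$, consistent with the single minimal relation predicted in norm $m+1$ by Corollary~\ref{cor:firstminrel}.
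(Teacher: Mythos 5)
Your proof is correct, and it follows the same skeleton as the paper's: expand each product $\delta_s\gamma_{m+2-s}$ via Lemma~\ref{lem:ysexpansion} with $j=m+1$, observe that the two terms $\fg_{2^{j-2},3}$ and $\fg_{2^{j-1},2}$ have support exceeding $2m$ and hence die (you route this through $p^{2(m+2)}_{2m}$, which is a slightly more careful way of saying the same thing), and reduce to the vanishing of the coefficient sums for $0\le r\le m-2$. Where you genuinely diverge is in the proof of that vanishing. The paper imports a known Catalan identity, $\sum_{k\ge0}\binom{n-k}{k}C_{n-k-1}(-1)^k=0$ for $n\ge2$, from \cite{gauthier2011families}, performs two different substitutions to obtain two linear equations in $\mathcal{S}_r$ and an auxiliary sum $A_r$, and solves them to get $(j-r-2)A_r=0$, hence $A_r=\mathcal{S}_r=0$ for $r\le j-3$. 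You instead split the weight $s=r+t$ into the two sums $A_n$ and $B_n$ and evaluate their ordinary generating functions directly, using $W_{u+t+2,t}=\binom{u+2}{t}+\binom{u+1}{t-1}$ to close the inner sum and the collapse $1-4y(1-y)=(1-2y)^2$ to reduce $C(y(1-y))$ to $\tfrac1{1-y}$; I checked that this yields $\sum_n A_ny^n=1-2y$ and $\sum_n B_ny^n=-3y$, so both vanish for $n\ge2$ as needed (and the boundary term $t=0$, absent from the sum when $r=0$, is harmless since it carries weight $t=0$ in $B_n$ and is multiplied by $r=0$ in $A_n$). The trade-off: the paper's route is shorter once the external identity is granted, while yours is self-contained, proves that identity along the way, and handles both auxiliary sums in one computation; either could replace the other.
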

Note that the relation is nontrivial: by Proposition \ref{prop:sufficientgenerators} every $\delta_s$ can be rewritten in terms of the generators $\gamma_2,\ldots,\gamma_{m+1}$, and doing so yields a non-zero coefficient of $\gamma_2^{m+1}$, coming from the summand with $s=m$ (the other summands are multiples of a generator $\gamma_i$ with $i\ge3$). 
\begin{proof}
Let $j = m+1$ be fixed. From Lemma~\ref{lem:ysexpansion} we have
\begin{align*}
    \sum_{s=1}^{j-1} &(-1)^ss\cdot C_{j-1-s} \cdot\delta_s\cdot \gamma_{j+1-s}\\
    &= \sum_{s=1}^{j-1} (-1)^ss\cdot C_{j-1-s}
        \cdot\sum_{r=0}^{j-1}\frac{j+1-r}{j+1-s} \binom{j+1-s}{s-r}\cdot\fg_{2^r,j+1-r}\\
    &= \sum_{r=0}^{j-1} (j+1-r)\cdot \sum_{s=1}^{j-1} 
        \frac{(-1)^ss}{j+1-s} \binom{j+1-s}{s-r}C_{j-1-s}\cdot\fg_{2^r,j+1-r}.
\end{align*}
Since $\fg_{2^{j-2},3} = \fg_{2^{j-1},2} = 0$ in $\A(2j-2)$, it suffices to prove that
\begin{equation*}
    \mathcal{S}_r = \sum_{s=1}^{j-1} 
        \frac{(-1)^ss}{j+1-s} \binom{j+1-s}{s-r}C_{j-1-s}
        = 0, \qquad \mbox{for }0\leq r\leq j-3.
\end{equation*}
Starting from \cite[Eq. 16]{gauthier2011families} we have
\begin{equation*}
    0 = \sum_{k\geq 0} \binom{n-k}{k} C_{n-k-1} (-1)^k,  \qquad\mbox{for } n\geq 2.
\end{equation*}
On one hand, we can substitute $n=j+1-r$, $k=s-r$ and use the relation $C_{m+1} = \frac{2(2m+1)}{m+2}C_m$ to obtain
\begin{align}\label{eq:bernoulli1}
    0 &= \frac{1}{2}\sum_{s\geq r} \binom{j+1-s}{s-r} C_{j-s} (-1)^s \nonumber \\
    &= \sum_{s\geq r} \binom{j+1-s}{s-r} \frac{(2j-2s-1)}{j+1-s}C_{j-s-1} (-1)^s \nonumber \\
    &= (2j-1) A_r - 2 \mathcal{S}_r,
\end{align}
where
\begin{equation*}
    A_r = \sum_{s\geq r} \frac{(-1)^s}{j+1-s} \binom{j+1-s}{s-r} C_{j-s-1}.
\end{equation*}
On the other hand we can use the substitution $n=j-r$, $k=s-r$ to get
\begin{align} \label{eq:bernoulli2}
    0 &= \sum_{s\geq r} \binom{j-s}{s-r} C_{j-s-1} (-1)^s \nonumber \\
    &= \sum_{s\geq r} \frac{j+1-2s+r}{j+1-s} \binom{j+1-s}{s-r} C_{j-s-1} (-1)^s \nonumber \\
    &= (j+1+r)A_r -2\mathcal{S}_r.
\end{align}
Subtracting \eqref{eq:bernoulli2} from \eqref{eq:bernoulli1} yields $(j-r-2)A_r = 0$, which in return implies that  $A_r=\mathcal{S}_r=0$ for $0\leq r \leq j-3$.
\end{proof}

\subsection{Non-vanishing determinant of Pascal matrix}
\label{subsec:pascalarray}
We complete the proof of Proposition \ref{prop:deltagenerators} by showing that for $j\ge2$ the $(j-1)\times(j-1)$ \emph{Pascal matrix} $\tilde C = (\tilde c_{k,l})_{1\leq k, l \leq j-1}$ is invertible. Here is, as an example, the matrix $\tilde C$ for $j=7$:
\begin{align*}
    \begin{bmatrix}
    2 & 1  & 0  & 0 & 0 & 0 \\
    1 & 3  & 3  & 1 & 0 & 0 \\
    0 & 1  & 4  & 6 & 4 & 1 \\
    0 & 0  & 1  & 5 & 10& 10\\
    0 & 0  & 0  & 1 & 6 & 15\\
    0 & 0  & 0  & 0 & 1 & 7 \\
    \end{bmatrix}.
\end{align*}
Invertibility of $\tilde C$
is a consequence of the following lemma, which shows that the determinant of $\tilde C$ in fact the Catalan number $C_j=\frac{1}{j+1}\binom{2j}{j}$.

\begin{lemma}\label{lem:pascalarray}
Consider the array with the rows of Pascal's triangle given by
\begin{align*}
    X =
    \begin{bmatrix}
    2 & 1 & 0 & 0 & 0 & 0 & 0 & 0 & \cdots \\
    1 & 3 & 3 & 1 & 0 & 0 & 0 & 0 & \cdots \\
    0 & 1 & 4 & 6 & 4 & 1 & 0 & 0 & \cdots \\
    0 & 0 & 1 & 5 & 10 & 10 & 5 & 1 & \cdots \\
    \vdots & \vdots & \vdots & \vdots & \vdots
    & \vdots & \vdots & \vdots &\ddots
    \end{bmatrix},
\end{align*}
where $X_{ij} = \binom{i+1}{j-i + 1}$. Then the non-reduced echelon form of $X$ is the array
\begin{align*}
    Y=
    \begin{bmatrix}
    2 & 1 & 0 & 0 & 0 & 0 & 0 & 0 & \cdots \\
    0 & 5 & 6 & 2 & 0 & 0 & 0 & 0 & \cdots \\
    0 & 0 & 14 & 28 & 20 & 5 & 0 & 0 & \cdots \\
    0 & 0 & 0 & 42 & 120 & 135 & 70 & 14 & \cdots \\    \vdots & \vdots & \vdots & \vdots
    & \vdots & \vdots & \vdots & \vdots &\ddots
    \end{bmatrix},
\end{align*}
where $Y_{ij} = B_{i,j-i}$ are the numbers from Borel's triangle, given by
\begin{align*}
    B_{n,k} = \frac{1}{n+1} \binom{2n+2}{n-k} \binom{n+k}{n}.
\end{align*}
In particular the diagonal of $Y$ consists of the Catalan numbers, and the determinant of the top left $n\times n$ minor of $X$ is
\begin{align*}
    \det \left((X_{ij})_{1\leq i,j\leq n}\right) 
    = C_{n+1},
\end{align*}
where $C_n$ is the $n$\textsuperscript{th} Catalan number.
\end{lemma}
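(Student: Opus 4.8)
The plan is to exhibit the claimed echelon form $Y$ as the explicit output of a one-subtraction-per-row elimination, and then to read off both the Borel-triangle entries and the determinant from that process. The key structural observation is that $X_{ij}=\binom{i+1}{j-i+1}$ vanishes unless $i-1\le j\le 2i$; in particular, in row $i$ the \emph{only} nonzero entry lying in a column $j<i$ is the entry $X_{i,i-1}=\binom{i+1}{0}=1$ at column $i-1$. Because of this banded shape a single subtraction clears row $i$ to the left of the staircase, and no cascade of eliminations is needed. Concretely, writing $R_i$ for the $i$-th row of $X$, I set $Y_1:=R_1$ and, for $i\ge2$,
\[
Y_i:=C_i\,R_i-Y_{i-1},
\]
where $C_i$ is the $i$-th Catalan number, and I will prove that $Y_{ij}=B_{i,j-i}$ for all $j$.

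I would prove this by induction on $i$. The base case $i=1$ is immediate, since $R_1=(2,1,0,\dots)=(B_{1,j-1})_j$. For the inductive step, assume $Y_{i-1,j}=B_{i-1,j-(i-1)}$. By the pivot identity $B_{n,0}=C_{n+1}$ the pivot of $Y_{i-1}$ is $Y_{i-1,i-1}=B_{i-1,0}=C_i$; since $X_{i,i-1}=1$, the combination $C_iR_i-Y_{i-1}$ is zero in column $i-1$ (and trivially in earlier columns), so $Y_i$ starts at column $i$. Writing $k:=j-i$, the remaining entries $Y_{ij}=C_iX_{ij}-Y_{i-1,j}=C_i\binom{i+1}{k+1}-B_{i-1,k+1}$ equal $B_{i,k}$ for $k\ge0$ precisely by the binomial identity
\[
B_{n,k}+B_{n-1,k+1}=C_n\binom{n+1}{k+1}
\]
evaluated at $n=i$. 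This establishes $Y_{ij}=B_{i,j-i}$ for all $i,j$, so that $Y$ is the non-reduced echelon form of $X$, and its diagonal entries are $Y_{ii}=B_{i,0}=C_{i+1}$, as claimed.

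The determinant then follows by bookkeeping of the row operations. Restricting to the first $n$ columns commutes with row operations, so the leading $n\times n$ block of $Y$ equals $L\cdot(X_{ij})_{1\le i,j\le n}$, where $L$ is the lower-triangular change-of-rows matrix recorded by the recursion. Unrolling $Y_i=\sum_{i'=1}^{i}(-1)^{i-i'}C_{i'}R_{i'}$ shows that $L$ has diagonal $(C_1,\dots,C_n)$, hence $\det L=\prod_{i=1}^nC_i$. On the other hand the leading block of $Y$ is upper-triangular with diagonal $C_2,\dots,C_{n+1}$, so its determinant is $\prod_{i=1}^nC_{i+1}$. Dividing and telescoping, using $C_1=1$, yields
\[
\det\big((X_{ij})_{1\le i,j\le n}\big)=\frac{\prod_{i=1}^nC_{i+1}}{\prod_{i=1}^nC_i}=C_{n+1}.
\]

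The main obstacle is the verification of the two purely combinatorial identities $B_{n,0}=C_{n+1}$ and $B_{n,k}+B_{n-1,k+1}=C_n\binom{n+1}{k+1}$; once these are in hand the linear algebra above is automatic. Both are finite identities and can be dispatched by writing $B_{n,k}=\frac1{n+1}\binom{2n+2}{n-k}\binom{n+k}{n}$ and $C_n=\frac1{n+1}\binom{2n}{n}$ in terms of factorials and simplifying, or by invoking the standard recurrence for Borel's triangle. I expect the second identity to be the only place where genuine care is required, as it must hold uniformly in $k$: for $k\ge0$ it is the substantive recursion, at the boundary $k=-1$ the needed cancellation is exactly the pivot identity $B_{n,0}=C_{n+1}$, and for $k\le-2$ or for $k$ large all terms vanish so that the entries off the staircase are correctly zero.
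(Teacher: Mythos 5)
Your proof is correct and follows essentially the same route as the paper: a single elimination step per row (scale row $i$ by $C_i=B_{i-1,0}$ and subtract the previous reduced row, exploiting the banded shape of $X$), justified by the same binomial identity --- your $B_{n,k}+B_{n-1,k+1}=C_n\binom{n+1}{k+1}$ is the paper's $B_{n,0}\binom{n+2}{k}-B_{n,k}=B_{n+1,k-1}$ after reindexing --- followed by the same determinant bookkeeping via the product of the row scalings. The one step you defer, the factorial verification of that identity, is precisely the computation the paper carries out, and the identity is indeed true.
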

\begin{proof}
We prove by induction on $n\ge1$ that, after the $(n-1)$\textsuperscript{st} step of Gaussian elimination by rows on $X$, the $n$\textsuperscript{th} row is equal to the $n$\textsuperscript{th} row of $Y$. For $n=1$ this is clear since both arrays start with the row $(2,1,0,0,\ldots)$. Let now $n\ge1$ and assume that the first $(n-1)$ steps of Gaussian elimination transform the $n$\textsuperscript{th} row of $X$ into the $n$\textsuperscript{th} row of $Y$, and consider the $n$\textsuperscript{th} step of Gaussian elimination. By assumption the array $X$ has been transformed in such a way that the $n$\textsuperscript{th} and $(n+1)$\textsuperscript{st} rows look as follows
\begin{align*}
    \begin{array}{cccccccccccc}
        n\text{\textsuperscript{th} row: }  & 0 & \cdots & 0 & B_{n,0} & B_{n,1} & \ldots & B_{n,n} & 0 & 0 & 0 &  \cdots \\
        (n+1)\text{\textsuperscript{st} row: } & 0 & \ldots & 0 & \binom{n+2}{0} & \binom{n+2}{1} & \ldots & \binom{n+2}{n} & \binom{n+2}{n+1} & \binom{n+2}{n+2} & 0 & \cdots
    \end{array}
\end{align*}
where there are $n-1$ zeroes in the beginning of both rows. Hence Gaussian elimination will multiply the $(n+1)$\textsuperscript{st} row by $B_{n,0}$ and subtract the $n$\textsuperscript{th}
row from the $(n+1)$\textsuperscript{st}. This results in a row with entries
\[
X_{n+1,j}' =  B_{n,0} \binom{n+2}{j - n} - B_{n,j-n}.
\]
If $j-n> n$, we have the following equalities:
\begin{itemize}
    \item $B_{n,0} \binom{n+2}{j - n} - B_{n,j-n}=0=B_{n+1,j-n-1}$, if $j-n>n+2$;
    \item $B_{n,0} \binom{n+2}{j - n} - B_{n,j-n}=B_{n,0}=B_{n+1,n+1}=B_{n+1,j-n-1}$, if $j-n=n+2$;
        \item $B_{n,0} \binom{n+2}{j - n} - B_{n,j-n}=B_{n,0}=B_{n+1,n}=B_{n+1,j-n-1}$, if $j-n=n+1$.
\end{itemize}
If instead $j-n\le n$, using the explicit formula for the Borel numbers and setting $k = j-n$ we obtain
\begin{align*}
    B_{n,0} \binom{n+2}{k} - B_{n,k} 
    &= \frac{1}{n+1} \left[ \binom{n+2}{k}\binom{2n+2}{n} - \binom{2n+2}{n-k}\binom{n+k}{n}\right]\\
    &= \frac{(2n+2)!}{(n+1)!k!} \left[ \frac{1}{(n+2-k)!} -\frac{(n+k)!}{(n-k)!(n+2+k)!} \right]\\
    &= 
    \frac{(2n+2)!}{(n+1)!k!}\ 
    \frac{(n-k)!(n+2+k)! - (n+k)!(n+2-k)!}{(n+2-k)!(n-k)!(n+2+k)!} \\
    &= 
    \frac{(2n+2)!}{(n+1)!k!}\ 
    \frac{(4n+6)k(n+k)!}{(n+2-k)!(n+2+k)!}\\
    &= \frac{4n+6}{(2n+3)(2n+4)} \binom{2n+4}{n+2-k} \binom{n+k}{n+1} \\
    &= B_{n+1,k-1},
\end{align*}
where we have used the identity $(n-k)!(n+2+k)! - (n+k)!(n+2-k)! = (4n+6)k(n-k)!(n+k)!$, holding for $0\le k\le n$.

This shows that $Y$ is the echelon form of $X$, and moreover, it shows that the $(n-1)$\textsuperscript{st} step of Gaussian elimination multiplies the $n$\textsuperscript{th} row by $B_{n-1,0}=C_{n}$. Hence the determinant of any top left $n\times n$ minor of $X$ is given by
\begin{align*}
    \det \left((X_{ij})_{1\leq i,j\leq n}\right) 
    = \frac{\det(\left((Y_{ij})_{1\leq i,j\leq n}\right))}{C_1\cdots C_{n}} = \frac{C_1\cdots C_{n+1}}{C_1\cdots C_n} = C_{n+1}.
\end{align*}
\end{proof}

\section{A recursion formula for multiplication}
\label{sec:recursionformula}
In this subsection we provide a recursive formula for computing the product $\fg_{\ual} \fg_{\ube}$ in $\A(d)$, using only the sequences $\ual = (\alpha_2,\alpha_3,\dots)$ and $\ube=(\beta_2,\beta_3,\dots)$ rather than the permutations they give rise to. This recursive formula is used to compute the multiplication tables for $\A(d)$ appearing in Section \ref{sec:smallcases}. We remark that a similar formula, in the case in which either $\ual$ or $\ube$ is of the form $\gamma_i$ for some $i\ge2$, was given in \cite[Lemma 2.2]{Hikita}.

For $\ual$, $\ube$ and $\uep$
as in Notation \ref{nota:productcoefficientbis}
we want to compute the product $\fg_{\ual}\fg_{\ube}$, and in particular determine the coefficient
$\theta(\uep;\ual,\ube)$
of the basis vector $\fg_{\uep}$ in the standard expression of this product.

 In the following we give recursive formulas to compute all coefficients $\theta(\uep;\ual,\ube)$. The basis of the recursion is $\theta(\uep;\uep,\uz)=\theta(\uep;\uz,\uep)=1$, where $\uz$ denotes the zero sequence: notice indeed that $\fg_{\uz}$ is the neutral element of $\A(d)$.

\subsection{The case of a single target cycle}
We first focus on the case in which $\uep=\one_\ell$ corresponds to permutations with a single cycle of length $2\le\ell\le d$; in this subsection we denote by $\ual$ and $\ube$ the two sequences corresponding to the two factors.
\begin{lemma}
\label{lem:nonvanish}
Let $\ual$ and $\ube$ be two sequences, and assume that $\theta(\one_\ell;\ual,\ube)$ does not vanish;
then both of the following are satisfied:
\begin{itemize}
 \item $N(\ual)+N(\ube)=N(\one_\ell)=\ell-1$;
 \item $\supp(\ual)\le \ell$ and $\supp(\ube)\le \ell$.
\end{itemize}
\end{lemma}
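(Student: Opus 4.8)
The plan is to unpack the combinatorial interpretation of the coefficient $\theta(\one_\ell;\ual,\ube)$ and extract the two asserted constraints directly from the geometry of cycle decompositions. By Notation \ref{nota:productcoefficientbis}, the coefficient $\theta(\one_\ell;\ual,\ube)$ is nonzero only if it is defined, and the definition already presupposes the norm-additivity condition $N(\ual)+N(\ube)=N(\one_\ell)$. Since $N(\one_\ell)=\sum_{i}(i-1)\l_i$ evaluated on the sequence with a single $\ell$-cycle equals $\ell-1$, the first bullet point is essentially forced by the requirement that the product $\fg_{\ual}\fg_{\ube}$ has a nonzero $\fg_{\one_\ell}$-component in the graded algebra: the multiplication rule in $\grad^N\Q[\fS_d]$ kills any product $[\sigma][\tau]$ whose target norm is not additive. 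So the first step is simply to invoke that a nonvanishing $\theta$ requires a geodesic factorisation $\rho=\sigma\tau$ with $\rho\in(\one_\ell)$, $\sigma\in(\ual)$, $\tau\in(\ube)$, and $N(\rho)=N(\sigma)+N(\tau)$, which gives $N(\ual)+N(\ube)=\ell-1$.

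For the support bounds, I would fix such a geodesic factorisation $\rho=\sigma\tau$ with $\rho$ a single $\ell$-cycle. The key tool is Corollary \ref{cor:geodesicpair}: because the factorisation is geodesic, every cycle $c$ in the cycle decomposition of $\sigma$ is contained in some cycle $c'$ of $\rho=\sigma\tau$. But $\rho$ has exactly one nontrivial cycle, the $\ell$-cycle, together with fixpoints; so any nontrivial cycle of $\sigma$ must sit inside the support of this $\ell$-cycle, a set of size $\ell$. Hence $\supp(\sigma)\le\ell$, which is precisely $\supp(\ual)\le\ell$. The symmetric statement $\supp(\ube)\le\ell$ requires applying the same corollary to the factor $\tau$; here I would use the remark in the excerpt (just after Lemma \ref{lem:geodesictransposition}) that the geodesic-cycle-containment statement holds equally for the right factor, or equivalently pass to inverses and use $\rho^{-1}=\tau^{-1}\sigma^{-1}$, noting that inversion preserves cycle type, norm, and support.

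The main obstacle, such as it is, is purely bookkeeping: making sure that Corollary \ref{cor:geodesicpair} is being applied to the correct factor and in the correct order, since the corollary as stated controls the cycles of $\sigma$ inside $\sigma\tau$ but not immediately the cycles of $\tau$. I expect the cleanest route is to first establish $\supp(\ual)\le\ell$ from the corollary applied verbatim, then obtain $\supp(\ube)\le\ell$ by the inversion trick: from $\rho=\sigma\tau$ one gets $\rho^{-1}=\tau^{-1}\sigma^{-1}$, this is again a geodesic factorisation (norms are inversion-invariant), $\rho^{-1}$ is still a single $\ell$-cycle, and $\tau^{-1}$ lies in the conjugacy class $(\ube)$, so the same argument yields $\supp(\ube)\le\ell$. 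I would also remark that both support bounds are in fact immediate from the first bullet together with the elementary inequality $\supp(\ul)\le 2N(\ul)$ used in Proposition \ref{prop:algebraicindependence}, since $2(\ell-1)$ may exceed $\ell$; so the sharper bound $\le\ell$ genuinely needs the cycle-containment argument rather than the crude support-versus-norm estimate.
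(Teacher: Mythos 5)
Your proof is correct and follows essentially the same route as the paper's: extract a geodesic factorisation $\rho=\sigma\tau$ from the combinatorial meaning of $\theta(\one_\ell;\ual,\ube)$, read off the norm equality, and use Corollary \ref{cor:geodesicpair} to confine every nontrivial cycle of the factors inside the single $\ell$-cycle of $\rho$, giving $\supp\le\ell$. The only differences are cosmetic: the paper applies the corollary to both $\sigma$ and $\tau$ without comment where you supply the (correct) inversion argument for the $\tau$ factor, and your final sentence contradicts itself --- you presumably meant that the bounds are \emph{not} immediate from the crude estimate $\supp(\ul)\le 2N(\ul)$.
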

\begin{proof}
By definition of the product in the algebra $\A(d)$,
the coefficient $\theta(\one_\ell;\ual,\ube)$ does not vanish if and only if there exist permutations $\sigma$ of type $\ual$ and $\tau$ of type $\ube$ such that $N(\sigma\tau)=N(\sigma)+N(\tau)$ and such that $\sigma\tau$ is a permutation of type $\one_\ell$, i.e. it consists a single $\ell$-cycle.
 
 Assuming the existence of $\sigma$ and $\tau$ with the mentioned properties, we immediately obtain the equality $N(\ual)+N(\ube)=N(\one_\ell)=\ell-1$. 
 
 We then observe that, by Corollary \ref{cor:geodesicpair}, each non-trivial cycle of $\sigma$ and of $\tau$ is contained in the unique cycle $c$ of $\sigma\tau$, and thus $\supp(\sigma\tau)=\ell$ is an upper bound for both $\supp(\sigma)$ and $\supp(\tau)$.
 \end{proof}

\begin{nota}
For $\ell\ge2$ and any $\ual$, $\ube$ we denote
\[
\Theta(\one_\ell;\ual,\ube):=(\ell-1)!\ \theta(\one_\ell;\ual,\ube).
\]
\end{nota}
In the following we give a recursion for the numbers $\Theta(\one_\ell;\ual,\ube)$; the numbers of interest
$\theta(\one_\ell;\ual,\ube)$ can then be computed by dividing by $(\ell-1)!$.

Fix $\ell\ge2$ and $\ual$ and $\ube$ satisfying the hypotheses of Lemma \ref{lem:nonvanish}. 
Note that at least one of the inequalities
$\supp(\ual)\le \ell$ and $\supp(\ube)\le \ell$ must be strict: indeed if $\supp(\ual)= \ell$ and $\supp(\ube)= \ell$,
using that $N(\ual)\ge \frac 12\supp(\ual)$ and $N(\ube)\ge \frac 12\supp(\ube)$, we would not have $N(\ual)+N(\ube)=\ell-1$.

In the following assume without loss of generality that $\supp(\ual)<\ell$. For $i\ge2$ denote by $\del_i\ube$
the sequence
\[
 \del_i\ube=(\beta_2,\beta_3,\dots,\beta_{i-1}+1,\beta_i-1,\beta_{i+1},\dots).
\]
Here we use the following conventions:
\begin{itemize}
 \item if $i=2$, then we just take $(\beta_2-1,\beta_3,\beta_4,\dots)$;
 \item if the obtained sequence contains some negative number, then later the corresponding factor/summand will be automatically declared to be 0.
\end{itemize}
\begin{proposition}\label{prop:cycle_recursion}
Let $2\le\ell\le d$, and let $\ual$, $\ube$ be such that $N(\ual)+N(\ube)=\ell-1$, $\supp(\ual)<\ell$ and $\supp(\ube)\le \ell$; then we have
\[
 (\ell-\supp(\ual))\ \Theta(\one_\ell;\ual,\ube)=\ell\sum_{i=2}^\infty(i-1)\ (\del_i\ube)_{i-1}\ \Theta(\one_{\ell-1};\ual,\del_i\ube).
\]
Here $(\del_i\ube)_{i-1}$ denotes the $(i-1)$\textsuperscript{st} component of the sequence $\del_i\ube$, i.e. the one that increases by 1 when passing from $\ube$ to $\del_i\ube$. In the case $i=2$ we declare $(\del_i\ube)_{i-1}:=\ell-\supp(\ube)+1$.
\end{proposition}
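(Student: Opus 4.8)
The plan is to prove the identity by a double-counting argument, interpreting both sides as cardinalities of one set of decorated factorisations of an $\ell$-cycle. Fix an $\ell$-element set $S$. Since $\theta(\one_\ell;\ual,\ube)$ does not depend on which $\ell$-cycle on $S$ we factorise (all such cycles are conjugate inside $\fS_S$, and conjugation preserves cycle types), the normalised coefficient $\Theta(\one_\ell;\ual,\ube)=(\ell-1)!\,\theta(\one_\ell;\ual,\ube)$ counts exactly the triples $(\rho,\sigma,\tau)$ with $\rho$ an $\ell$-cycle on $S$, with $\sigma$ of type $\ual$, $\tau$ of type $\ube$, and $\rho=\sigma\tau$ subject to $N(\sigma)+N(\tau)=\ell-1$; the factor $(\ell-1)!$ is precisely the number of $\ell$-cycles on $S$. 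By Corollary \ref{cor:geodesicpair} every cycle of $\sigma$ and of $\tau$ is contained in the cycle of $\rho$, so all three permutations are supported on $S$; in particular $\sigma$ fixes exactly $\ell-\supp(\ual)$ points of $S$. Hence the left-hand side $(\ell-\supp(\ual))\,\Theta(\one_\ell;\ual,\ube)$ equals the number $L$ of \emph{marked} quadruples $(\rho,\sigma,\tau,p)$ in which $p\in S$ is a fixpoint of $\sigma$.

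First I would analyse a deletion map. Given a marked quadruple, $p$ cannot also be fixed by $\tau$, for otherwise $\rho=\sigma\tau$ would fix $p$, contradicting that $\rho$ is an $\ell$-cycle on $S\ni p$; thus $p$ lies in some $i$-cycle of $\tau$ with $i\ge2$. The key observation is that, since $\sigma(p)=p$, we have $\rho^{-1}(p)=\tau^{-1}(\sigma^{-1}(p))=\tau^{-1}(p)=:q$. I then delete $p$ from both $\rho$ and $\tau$ by setting $\rho'(q)=\rho(p)$, $\tau'(q)=\tau(p)$ and leaving the remaining values unchanged, while putting $\sigma'=\sigma|_{S\setminus\set{p}}$. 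A short case check (distinguishing $x\neq q$, $x=q$, and the new value at $p$) shows $\rho'=\sigma'\tau'$ on $S\setminus\set{p}$; moreover $\rho'$ is an $(\ell-1)$-cycle, $\sigma'$ still has type $\ual$, and $\tau'$ has type $\del_i\ube$ (the $i$-cycle through $p$ becomes an $(i-1)$-cycle, a transposition through $p$ simply disappearing when $i=2$). The norm drops by one on the $\tau$-side, so $N(\sigma')+N(\tau')=\ell-2=N(\one_{\ell-1})$, and $(\rho',\sigma',\tau')$ is one of the triples counted by $\Theta(\one_{\ell-1};\ual,\del_i\ube)$ on $S\setminus\set{p}$.

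Next I would count the fibres of this map with $p$ held fixed, by reconstructing quadruples through reinsertion. Given a triple $(\rho',\sigma',\tau')$ on $S\setminus\set{p}$ of type $(\one_{\ell-1},\ual,\del_i\ube)$, a preimage is obtained by choosing $q\in S\setminus\set{p}$, inserting $p$ immediately after $q$ in the cycle of $\rho'$ and in the cycle of $\tau'$, and keeping $\sigma=\sigma'$ with $p$ as an extra fixpoint; the same case check run backwards confirms $\rho=\sigma\tau$. For the reconstructed $\tau$ to have type $\ube$, the point $q$ must lie in an $(i-1)$-cycle of $\tau'$, and distinct admissible $q$ give distinct preimages, since $q=\rho^{-1}(p)$ is recorded by $\rho$. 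Thus the fibre over such a triple has size equal to the number of points lying in $(i-1)$-cycles of $\tau'$, namely $(i-1)(\del_i\ube)_{i-1}$; in the boundary case $i=2$ these are the fixpoints of $\tau'$, of which there are $(\ell-1)-\supp(\del_2\ube)=\ell-\supp(\ube)+1$, matching the stated convention $(\del_2\ube)_{1}:=\ell-\supp(\ube)+1$. Summing over the types $\del_i\ube$, the number of valid triples with $\sigma(p)=p$ is $\sum_{i\ge2}(i-1)(\del_i\ube)_{i-1}\,\Theta(\one_{\ell-1};\ual,\del_i\ube)$, independently of $p$.

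Finally I would assemble the count: summing the previous expression over the $\ell$ choices of $p\in S$ yields $L=\ell\sum_{i\ge2}(i-1)(\del_i\ube)_{i-1}\,\Theta(\one_{\ell-1};\ual,\del_i\ube)$, and comparing with $L=(\ell-\supp(\ual))\,\Theta(\one_\ell;\ual,\ube)$ gives the claimed recursion. I expect the main obstacle to be bookkeeping rather than any conceptual difficulty: one must verify that deletion and reinsertion are mutually inverse and that each preserves the relation $\rho=\sigma\tau$ (this is exactly where the identity $\rho^{-1}(p)=\tau^{-1}(p)$ is used), and one must handle the $i=2$ endpoint—where an $(i-1)$-cycle degenerates to a fixpoint not recorded by the sequence—so that the convention for $(\del_2\ube)_1$ comes out correctly.
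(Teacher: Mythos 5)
Your argument is correct and is essentially the paper's own proof: both sides count the same set of quadruples $(\rho,\sigma,\tau,p)$ with $p$ a fixpoint of $\sigma$, and your deletion/reinsertion map (removing $p$ from $\rho$ and $\tau$ via $q=\tau^{-1}(p)=\rho^{-1}(p)$) coincides with the paper's passage to $\rho^{(j)}=\rho(j,\tau^{-1}(j))$ and $\tau^{(j)}=\tau(j,\tau^{-1}(j))$, with the same fibre count $(i-1)(\del_i\ube)_{i-1}$ and the same treatment of the $i=2$ convention. No substantive differences.
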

\begin{proof}
We use a double counting argument. We count the number of quadruples $(\sigma,\tau,\rho,j)$ such that all of the following hold:
\begin{enumerate}
    \item $\sigma\in\fS_d$ is of type $\ual$, $\tau\in\fS_d$ is of type $\ube$;
    \item $\rho\in\fS_d$ is of type $\one_\ell$, and the support of $\rho$, considered as a subset of $\set{1,\dots,d}$, is precisely the set $\set{1,\dots,\ell}$;
    \item $\sigma\tau=\rho$;
    \item $j$ is one of the fixpoints of $\sigma$ contained in $\set{1,\dots,\ell}$.
\end{enumerate}
In the first count, we first choose $\rho$: there are $(\ell-1)!$ ways in order to ensure (2). We then choose $\sigma$ and $\tau$: there are $\theta(\one_\ell;\ual,\ube)$ ways in order to ensure (1) and (3). Finally, we choose $j$: there are $(\ell-\supp(\ual))$ ways, as we have to avoid the elements of $\set{1,\dots,\ell}$ belonging to the support of $\sigma$. Thus the first count gives the left hand side.

In the second count, we first choose $j$ in $\set{1,\dots,\ell}$: there are $\ell$ ways.

We then note the following: for any triple $(\sigma,\tau,\rho)$ forming an allowed quadruple together with $j$,
the element $j$ cannot be a fixpoint of $\tau$: indeed by (4) $j$ is a fixpoint of $\sigma$, but by (2) $j$ is not a fixpoint of $\rho$, and by (3) we must have $\rho=\sigma\tau$. For a triple $(\sigma,\tau,\rho)$ as above we denote by $\tau^{(j)}$ the permutation $\tau (j,\tau^{-1}(j))$, obtained by 
composing $\tau$ and the transposition
swapping $j$ and $\tau^{-1}(j)$. Note also that (3) and (4) also imply that $\tau^{-1}(j)=\rho^{-1}(j)$. We also denote $\rho^{(j)}=\rho(j,\tau^{-1}(j))$; note that (3) is equivalent to the equality $\rho^{(j)}=\sigma\tau^{(j)}$, and that both $\rho^{(j)}$ and $\tau^{(j)}$ have $j$ as fixpoint. Moreover (2) implies that $\rho^{(j)}$ is of type $\one_{\ell-1}$, whereas (1) implies that $\tau^{(j)}$ is of type $\del_i\ube$ for some $i\ge2$ (this $i$ is the length of the cycle of $\tau$ containing $j$).

We can now continue the second count as follows. After $j$ is chosen, we choose
$i\ge2$, and attempt to count triples $(\sigma,\tau,\rho)$ forming an allowed quadruple together with $j$, and such that $j$ belongs to a cycle of $\tau$ of length $i$; at the end we will sum over $i\ge2$. Once also $i$ has been fixed, we first choose $\rho^{(j)}$: there are $(j-2)!$ ways, since it has to be a permutation of type $\one_{\ell-1}$ with support the set $\set{1,\dots,\ell}\setminus\set{j}$. Next, we choose permutations $\sigma$ of type $\ual$ and $\tau^{(j)}$ of type $\del_i\ube$ such that $\sigma\tau^{(j)}=\rho^{(j)}$: there are $\theta(\one_{\ell-1};\ual,\del_i\ube)$ possibilities, and an application of Corollary \ref{cor:geodesicpair} ensures that $j$ is a fixpoint for both $\sigma$ and $\tau^{(j)}$. Next, we choose the element $\tau^{-1}(j)$: it must belong to
the set $\set{1,\dots,\ell}$, and it must belong to a cycle of $\tau^{(j)}$ of length $i-1$, so there are $(i-1)(\del_i\ube)_{i-1}$ possibilities. Thus the second count gives the right hand side.
\end{proof}
We observe that Proposition \ref{prop:cycle_recursion} implies that the converse of Lemma \ref{lem:nonvanish} holds: if $\ual$ and $\ube$ are two sequences satisfying both conditions in Lemma \ref{lem:nonvanish}, then $\theta(\one_\ell;\ual,\ube)>0$. We will however not use this remark.

More importantly, the hypotheses of Proposition \ref{prop:cycle_recursion} imply that the factor $(\ell-\supp(\ual))$ in the left hand side is non-zero, so we are allowed to divide by it and thus obtain a formula expressing $\Theta(\one_\ell;\ual,\ube)$ in terms of other coefficients $\Theta(\one_{\ell-1};\ual,\del_i\ube)$, related to sequences of strictly smaller support: this allows to compute all coefficients $\Theta(\one_\ell;\ual,\ube)$ recursively.

\subsection{Reduction to a single target cycle}
Consider now the case in which $\uep$ is a not of the form $\one_\ell$, and fix an index $\nu\ge2$ for which $\epsilon_\nu\ge1$.
\begin{defn}
A $\uep$-decomposition of $\ual$ is a couple of sequences $(\uA,\uA')$ satisfying the following properties:
\begin{itemize}
 \item $\uA+\uA'=\ual$ (here the sum of sequences is meant componentwise);
 \item $\supp(\uA)\le \nu$ (this implies in particular that $A_i=0$ for $i>\nu$).
\end{itemize}
If $(\uA,\uA')$ and $(\uB,\uB')$ are $\uep$-decompositions of $\ual$ and $\ube$ respectively, we say that they are \emph{$\uep$-compatible} if $N(\uA)+N(\uB)=\nu-1$.
\end{defn}
We observe that if $(\uA,\uA')$ and $(\uB,\uB')$ are $\uep$-compatible $\uep$-decompositions of $\ual$ and $\ube$,
then by the converse of Lemma \ref{lem:nonvanish} we have $\theta(\one_\nu;\uA,\uB)>0$. We will however not use this remark.

\begin{proposition}\label{prop:general_recursion}
 Let $\ual,\ube,\uep$ be sequences with $N(\ual)+N(\ube)=N(\uep)$ and such that $\max(\supp(\ual),\supp(\ube),\supp(\uep))\le d$. Let $\nu\ge2$ be an index such that $\epsilon_\nu\ge1$. Then we have
 \[
  \theta(\uep;\ual,\ube)=\sum_{(\uA,\uA'),(\uB,\uB')} \theta(\one_\nu;\uA,\uB)\,\theta(\uep-\one_\nu;\uA',\uB'),
 \]
where the sum is extended over all couples of $\uep$-compatible $\uep$-decompositions of $\ual$ and $\ube$, and where the difference of sequences $\uep-\one_\nu$ is computed coordinatewise.
\end{proposition}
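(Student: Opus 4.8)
The plan is to prove the identity by a bijection between factorisations, building on the combinatorial interpretation of $\theta$ recorded after Notation \ref{nota:productcoefficient}. Fix a permutation $\rho\in(\uep)\subset\fS_d$; then $\theta(\uep;\ual,\ube)$ is the number of factorisations $\rho=\sigma\tau$ with $\sigma$ of type $\ual$ and $\tau$ of type $\ube$, and since $N(\ual)+N(\ube)=N(\uep)$ every such factorisation automatically satisfies $N(\sigma)+N(\tau)=N(\rho)$. Because $\epsilon_\nu\ge1$ I may single out a distinguished $\nu$-cycle $c_0$ of $\rho$, supported on a subset $S\subset\set{1,\dots,d}$ with $|S|=\nu$; writing $\rho=c_0\cdot\rho'$, the complementary factor $\rho'$ is supported on $\set{1,\dots,d}\setminus S$ and has type $\uep-\one_\nu$.

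The core of the argument is a splitting of each factorisation along $S$. Given $\rho=\sigma\tau$ as above, Corollary \ref{cor:geodesicpair} (applied to both factors, exactly as in the proof of Lemma \ref{lem:phd}) shows that every cycle of $\sigma$ and of $\tau$ lies inside a single cycle of $\rho$, hence is supported either in $S$ or in its complement. I therefore factor $\sigma=\sigma_0\sigma_1$ and $\tau=\tau_0\tau_1$, collecting in $\sigma_0,\tau_0$ the cycles supported in $S$ and in $\sigma_1,\tau_1$ those supported in the complement. As factors on disjoint supports commute, $\rho=\sigma_0\tau_0\cdot\sigma_1\tau_1$ with the first product supported in $S$ and the second in its complement; comparing with $\rho=c_0\rho'$ forces $\sigma_0\tau_0=c_0$ and $\sigma_1\tau_1=\rho'$. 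Denoting by $\uA,\uB$ the types of $\sigma_0,\tau_0$ and by $\uA',\uB'$ those of $\sigma_1,\tau_1$, disjointness gives $\uA+\uA'=\ual$, $\uB+\uB'=\ube$ and $\supp(\uA),\supp(\uB)\le|S|=\nu$, so $(\uA,\uA')$ and $(\uB,\uB')$ are $\uep$-decompositions. Conversely, any choice of factorisations $c_0=\sigma_0\tau_0$ and $\rho'=\sigma_1\tau_1$ of the prescribed types recombines, via $\sigma=\sigma_0\sigma_1$ and $\tau=\tau_0\tau_1$, to a factorisation $\rho=\sigma\tau$ of types $\ual,\ube$ — the factors of $c_0$ being automatically supported in $S$, again by Corollary \ref{cor:geodesicpair}. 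This gives a bijection that partitions the factorisations of $\rho$ according to the pair $(\uA,\uB)$, and counting each side yields $\theta(\uep;\ual,\ube)=\sum\theta(\one_\nu;\uA,\uB)\,\theta(\uep-\one_\nu;\uA',\uB')$, since $c_0$ and $\rho'$ are fixed representatives of $(\one_\nu)$ and $(\uep-\one_\nu)$.

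It then remains to see that the surviving terms are exactly the $\uep$-compatible ones. Because the relevant supports are disjoint, $N(\sigma)=N(\sigma_0)+N(\sigma_1)$ and $N(\tau)=N(\tau_0)+N(\tau_1)$, while $N(\rho)=N(c_0)+N(\rho')$ with $N(c_0)=\nu-1$. Substituting into $N(\sigma)+N(\tau)=N(\rho)$ and comparing with the triangle inequalities $N(c_0)\le N(\sigma_0)+N(\tau_0)$ and $N(\rho')\le N(\sigma_1)+N(\tau_1)$ forces both to be equalities, so $N(\uA)+N(\uB)=\nu-1$; this is precisely $\uep$-compatibility, and it is also what makes $\theta(\one_\nu;\uA,\uB)$ and $\theta(\uep-\one_\nu;\uA',\uB')$ meaningful in the sense of Notation \ref{nota:productcoefficient}. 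Any decomposition failing this condition would produce $N(\sigma)+N(\tau)>N(\rho)$, which cannot occur, so such terms are genuinely absent from the sum.

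I expect the only delicate point to be the bijection itself: verifying that the splitting $\sigma\mapsto(\sigma_0,\sigma_1)$ is well-defined (which rests entirely on Corollary \ref{cor:geodesicpair} forcing each cycle to sit inside a single cycle of $\rho$) and that the recombination produces a permutation with exactly the prescribed cycle type and no hidden interaction across $S$. Once the disjoint-support bookkeeping is in place, the norm accounting above closes the proof, and the recursion of Proposition \ref{prop:cycle_recursion} then computes the single-cycle coefficients $\theta(\one_\nu;\uA,\uB)$ appearing on the right.
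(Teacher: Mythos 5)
Your proof is correct and follows essentially the same route as the paper: fix a representative $\rho$ of type $\uep$, use Corollary \ref{cor:geodesicpair} to split each factorisation $\rho=\sigma\tau$ along the support of a distinguished $\nu$-cycle, and count the resulting pairs of factorisations of the two pieces. The only difference is cosmetic — you make explicit the triangle-inequality argument showing that only $\uep$-compatible decompositions occur, which the paper leaves implicit.
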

\begin{proof}
Fix a permutation $\rho\in\fS_d$ of type $\uep$, such that the set $\set{1,\dots,\nu}$ is the support of a cycle $\mathbf{c}$ of $\rho$ of length $\nu$. Denote $\hat\rho=\mathbf{c}\in\fS_d$ and $\hat\rho'=\rho\mathbf{c}^{-1}\in\fS_d$.

If $\sigma$ and $\tau$ are permutations of types $\ual$ and $\ube$ respectively, satisfying $\rho=\sigma\tau$, then by Corollary \ref{cor:geodesicpair} each cycle of $\sigma$ and each cycle of $\tau$ is contained in a cycle of $\rho$.
We can then factor $\sigma=\hat\sigma\hat\sigma'$, where $\hat\sigma$ is the product of all cycles of $\sigma$ with support contained in $\set{1,\dots,\nu}$, and $\hat\sigma'$ is the product of all cycles of $\sigma$ with support contained in $\set{\nu+1,\dots,d}$. Similarly, we can factor $\tau=\hat\tau\hat\tau'$. 
We then have $\hat\sigma\hat\tau=\hat\rho$ and $\hat\sigma'\hat\tau'=\hat\rho'$. If we denote by $\uA,\uA',\uB,\uB'$ the cycle types of $\hat\sigma,\hat\sigma',\hat\tau,\hat\tau'$ respectively, then we have that $(\uA,\uA')$ and $(\uB,\uB')$ are $\uep$-compatible $\uep$-decompositions of $\ual$ and $\ube$ respectively.

On the other hand, we can make the following count. We first fix 
$\uep$-compatible $\uep$-de\-com\-po\-si\-tions $(\uA,\uA')$ and $(\uB,\uB')$
of $\ual$ and $\ube$ respectively, and at the end we will sum over all choices of $(\uA,\uA')$ and $(\uB,\uB')$.
We then choose permutations $\hat\sigma$ of type $\uA$ and $\hat\tau$ of type $\uB$ with $\hat\rho=\hat\sigma\hat\tau$: we have $\theta(\one_\nu;\uA,\uB)$ possibilities, since $\hat\rho$ is a permutation of type $\one_\nu$. Finally, we choose permutations $\hat\sigma'$ of type $\uA'$ and $\hat\tau'$ of type $\uB'$ with $\hat\rho'=\hat\sigma'\hat\tau'$: we have $\theta(\uep-\one_\nu;\uA',\uB')$ possibilities, since $\hat\rho'$ is a permutation of type $\uep-\one_\nu$. This count gives the right hand side in the formula.
\end{proof}
Whenever $\uep$ is not of the form $\one_\ell$,
Proposition \ref{prop:general_recursion} gives a formula to express $\theta(\uep;\ual,\ube)$ in terms of other coefficients $\theta(\gamma_\nu;\uA,\uB)\,\theta(\uc-\gamma_\nu;\uA',\uB')$ related to sequences of strictly smaller support: this allows to compute all coefficients $\theta(\uep;\ual,\ube)$ recursively.

Our computer-aided computations in Section \ref{sec:smallcases} use the formula from Proposition \ref{prop:general_recursion} with the \emph{smallest} index $\nu\ge2$ such that $\epsilon_{\nu}\ge1$.

\section{Small case minimal presentations}
\label{sec:smallcases}
For $1\leq d\leq 10$ we are able to produce minimal presentations of $\A(d)$.
In Appendix~\ref{app:algo} we describe the main algorithm used to compute the minimal presentations. The algorithm is implemented with symbolic computation in Python using the SymPy package \cite{SymPy}, and the code, including the datasets generated during and analysed during the current study are available in the \href{https://github.com/AlexanderChristgau/generators-and-relations}{GitHub} repository, is publicly available\footnote{\url{https://github.com/AlexanderChristgau/generators-and-relations}}. As a result we have the following proposition on minimal presentations based on the generators $(x,y,z,w) =(\gamma_2,\gamma_3,\gamma_4,\gamma_5)$.
\begin{proposition}
For $1\leq d \leq 8$, we have the following presentations of $A(d)$:
\begin{align*}
    \A(1) \cong & \Q\\
    \A(2) \cong & \Q[x]/(x^2) \\
    \A(3) \cong & \mathbb{Q}[x]/(x^3) \\ 
    \A(4) \cong & \mathbb{Q}[x,y]/(x^3-4xy,x^4,y^2) \\
    \A(5) \cong & \mathbb{Q}[x,y]/(x^4-5x^2y,x^4-25y^2, x^5) \\
    \A(6) \cong & \mathbb{Q}[x,y,z]/(x^4 - 11x^2y +24 xz +6y^2,\\
        & \phantom{\mathbb{Q}[x,y,z]/(} x^5-6x^3y,x^5-36x^2z, x^5-216yz,x^6,z^2) \\
    \A(7) \cong & \mathbb{Q}[x,y,z]/(x^5 - 13x^3y + 28 x^2z +14 xy^2, \\
        & \phantom{\mathbb{Q}[x,y,z]/(} 
        11x^5 -129x^3y +280x^2z +588yz, \: x^6 - 7x^4y, \\ 
        & \phantom{\mathbb{Q}[x,y,z]/(} x^6-49x^3z, \: x^6-343y^3, \: x^6-2401z^2,\: x^7) \\
    \A(8) \cong & \mathbb{Q}[x,y,z,w]/
        (x^5 - 21x^3y + 92x^2z +54 xy^2-240xw-96yz, \\
        & \phantom{\mathbb{Q}[x,y,z,w]/(} 
        x^6  - 15x^4y + 32x^3z  24 x^2y^2, \\
        & \phantom{\mathbb{Q}[x,y,z,w]/(}
        x^6 + 9x^4y   -304x^3z  +1440x^2w  -96 y^3, \\
        & \phantom{\mathbb{Q}[x,y,z,w]/(}
        x^6 + 87x^4y  -1232x^3z  +5472x^2w + 5376 yw, \\
        & \phantom{\mathbb{Q}[x,y,z,w]/(}
        17x^6  -135x^4y + 784x^3z +5760x^2w + 3584 z^3, \\
        & \phantom{\mathbb{Q}[x,y,z,w]/(}
        x^7-8x^5y, x^7-64x^4z, x^7-512x^3w, x^7-32768zw, \\ 
        & \phantom{\mathbb{Q}[x,y,z,w]/(} x^8,w^2)
\end{align*}
Furthermore, minimal presentations for $\A(9)$ and $\A(10)$ are given in Appendix~\ref{app:A9A10}.
\end{proposition}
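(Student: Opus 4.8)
The plan is to reduce the statement to a finite sequence of exact linear-algebra computations over $\Q$, all of which are driven by the multiplication recursion of Section \ref{sec:recursionformula}. Fix $d$ with $1\le d\le 8$ and set $m=\floor{d/2}$. By Theorem \ref{thm:minimalgenerators} the elements $\gamma_{j+1}$ of norm $j$, for $1\le j\le m$, form a minimal generating set; in the notation of the statement these are $(x,y,z,w)=(\gamma_2,\gamma_3,\gamma_4,\gamma_5)$. They define the surjection $\phi\colon\Q[X_1,\dots,X_m]\to\A(d)$, $X_j\mapsto\gamma_{j+1}$, with $X_j$ of norm $j$, and a minimal presentation amounts to exhibiting explicit representatives for a homogeneous basis of the module of indecomposable relations $I_d/\Q[X_1,\dots,X_m]_+\cdot I_d$, where $I_d=\ker\phi$.

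The first key point is that only finitely many norms need to be inspected. By Proposition \ref{prop:algebraicindependence} and Corollary \ref{cor:firstminrel}, $I_d$ is concentrated in norms $\ge d-m+1$, and the argument following Corollary \ref{cor:firstminrel} shows there are no minimal relations in norm $\ge\floor{3d/2}=d+m$. Hence every indecomposable relation lives in the finite window $d-m+1\le j\le d+m-1$. In each such norm both sides of $\phi$ are finite-dimensional: the target $\A(d)_{j}$ has the explicit basis $\set{\fg_{\ul}:\ul\in\Lambda(j,d)}$, while the source $\Q[X_1,\dots,X_m]_{j}$ has the monomial basis indexed by sequences $\ul$ with $N(\ul)=j$ and $\l_i=0$ for $i>m+1$.

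The computation then proceeds norm by norm. For a given $j$ I would, first, enumerate $\Lambda(j,d)$ and the norm-$j$ monomials in $X_1,\dots,X_m$; second, expand the image under $\phi$ of each such monomial as a $\Q$-combination of the $\fg_{\ul}$, computing the required products of the $\gamma$'s via Propositions \ref{prop:cycle_recursion} and \ref{prop:general_recursion}; third, assemble these images into the matrix of $\phi$ in norm $j$ and extract $(I_d)_{j}$ as its kernel by Gaussian elimination. To pass from relations to \emph{minimal} relations I would compute the decomposable subspace $\pa{\Q[X_1,\dots,X_m]_+\cdot I_d}_{j}=\sum_{1\le i\le m}X_i\cdot(I_d)_{j-i}$, using the already-computed lower-norm kernels, and then lift a basis of the quotient $(I_d)_{j}/\pa{\Q[X_1,\dots,X_m]_+\cdot I_d}_{j}$ to explicit homogeneous polynomials. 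Collecting these representatives over all $j$ in the window gives a minimal presentation, which one then matches against the displayed list.

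The main obstacle is not any single deep step but the faithful, error-free bookkeeping that minimality demands. Since every operation is exact over $\Q$, correctness rests on (i) the validity of the recursion, already established, and (ii) correctly isolating the indecomposable part by subtracting $\Q[X_1,\dots,X_m]_+\cdot I_d$ at each norm and making consistent choices of coset representatives; an off-by-one in the enumeration of $\Lambda(j,d)$ or an inconsistent normalisation of the $\fg_{\ul}$ would corrupt the answer. This is why the computation is performed symbolically in exact arithmetic. As an independent check, each listed relation can be verified to lie in $I_d$ by applying $\phi$ and confirming the result vanishes, and the \emph{number} of relations in the lowest norms can be cross-checked against the dimension counts of Corollary \ref{cor:firstminrel}.
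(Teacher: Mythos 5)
Your proposal is correct and follows essentially the same route as the paper: the authors' proof is precisely the algorithm of Appendix~\ref{app:algo}, which for each norm up to $d+m$ expands the norm-$n$ monomials in the $\gamma_i$ via the recursions of Propositions~\ref{prop:cycle_recursion} and \ref{prop:general_recursion}, takes the nullspace of the resulting matrix to get $(I_d)_n$, spans the decomposable part by lifting lower-norm relations along multiplication by generators, and extracts a minimal set by Gaussian elimination. Your additional cross-checks (bounding the window of norms via Proposition~\ref{prop:algebraicindependence}, Corollary~\ref{cor:firstminrel}, and the $\floor{3d/2}$ bound, and verifying each listed relation lies in $I_d$) are consistent with, and slightly sharpen, what the paper does.
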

For $d\ge1$ and $n\ge1$,
let $\fr_{d,n}$ denote the number of minimal relations of $\A(d)$ in norm $n$.
These presentations lead us to construct the following table with the numbers $\fr_{d,n}$,
along with the total number of necessary relations (the entries under $\Sigma$) for a given $A(d)$.

\begin{align}\label{tab:reltable}
\begin{array}{c| c| c | c | c | c | c | c | c | c | c | c || c}
d \, \backslash \, n & 1 & 2 & 3 & 4 & 5 & 6 & 7 & 8 & 9 & 10 & 11 & \Sigma \\ \hline
1 & 0 & 0 & 0 & 0 & 0 & 0 & 0 & 0 & 0 & 0 & 0 & 0 \\ \hline
2 & 0 & 1 & 0 & 0 & 0 & 0 & 0 & 0 & 0 & 0 & 0 & 1 \\ \hline
3 & 0 & 0 & 1 & 0 & 0 & 0 & 0 & 0 & 0 & 0 & 0 & 1 \\ \hline 
4 & 0 & 0 & 1 & 2 & 0 & 0 & 0 & 0 & 0 & 0 & 0 & 3 \\ \hline 
5 & 0 & 0 & 0 & 2 & 1 & 0 & 0 & 0 & 0 & 0 & 0 & 3 \\ \hline
6 & 0 & 0 & 0 & 1 & 3 & 2 & 0 & 0 & 0 & 0 & 0 & 6\\ \hline 
7 & 0 & 0 & 0 & 0 & 2 & 4 & 1 & 0 & 0 & 0 & 0 & 7\\ \hline
8 & 0 & 0 & 0 & 0 & 1 & 4 & 4 & 2 & 0 & 0 & 0 & 11\\ \hline
9 & 0 & 0 & 0 & 0 & 0 & 2 & 5 & 5 & 1 & 0 & 0 & 13\\ \hline
10 & 0 & 0 & 0 & 0 & 0 & 1 & 4 & 7 & 4 & 2 & 0 & 18\\ \hline
11^* & 0 & 0 & 0 & 0 & 0 & 0 & 2 & 6 & 8 & 4 & 1 & 21\\ \hline
\end{array}
\end{align}
$^*$Based on numerical approximation when computing the nullspace $\operatorname{Null}(A)$ described in Appendix~\ref{app:algo}.

Note that the entries $\fr_{2k,k+1}=1$ for $k\ge1$ and $\fr_{2k+1,k+2}=2$ for $k\ge2$ follow from Corollary \ref{cor:firstminrel}.

\subsection{Conjectures}
Based on the evidence from the table \ref{tab:reltable}, we make the following conjecture.

\begin{conjecture} The following statements hold:
\begin{itemize}
    \item For all $d\ge 2$ the algebra $\A(d)$ can be described in terms of relations of norm at most $d$.\vspace{3pt}
\item For every $d_0, n_0\ge0$ the numbers $\fr_{d_0+2k,n_0+k}$ stabilise for $k\to\infty$.\vspace{3pt}
\item For every $d_0, n_0\ge0$ the numbers $\fr_{d_0+2k,n_0+2k}$ stabilise for $k\to\infty$.\vspace{3pt}
\item For $d\ge3$ odd, the total number of minimal relations in $\A(d)$ is
\[\frac{(d-1)(d-3)}{4}+1.\]
\item For $d\ge4$ even, the total number of minimal relations in $\A(d)$ is
\[\frac{d(d-2)}{4}-\frac{d-6}{2}.\]
\end{itemize}
\end{conjecture}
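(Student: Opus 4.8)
The plan is to treat the five assertions as the output of a single homological-stability package for the family $\A(d)$, anchored on the observation (recorded in the motivation) that the module of indecomposable relations is $H_1^{\QE_\infty}(\A(d))$, so that $\fr_{d,n}=\dim_\Q H_1^{\QE_\infty}(\A(d))_n$. First I would assemble the individual algebras into one bigraded object: the surjections $p^h_d\colon\A(h)\to\A(d)$ of Proposition \ref{prop:phd}, together with the external products coming from the $E_2$-structure on the Hurwitz/Hilbert-scheme side (as in \cite{Bianchi:Hur1,GKRW}), should organise $\bigoplus_d\A(d)$ as a module over a free $E_2$- (rationally $E_\infty$-) algebra. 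The two independent gradings, the ``size'' $d$ and the norm $n$, are precisely the two directions along which the conjecture predicts stabilisation, and the strategy is to run the GKRW machinery to produce a vanishing line and stability maps for $H_1^{\QE_\infty}$ in each direction; here the identification $\A(d)=\pa{\grad^N\Q[\fS_d]}^{\fS_d}$ lets representation stability for the $\fS_d$ feed in.

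For the two stabilisation statements I would argue as follows. The diagonal $\fr_{d_0+2k,n_0+k}$ keeps $n-d/2$ fixed, hence tracks relations at a bounded offset \emph{above} the bottom of the relation band (which begins in norm $m+1$ for $d=2m$ by Corollary \ref{cor:firstminrel}); the diagonal $\fr_{d_0+2k,n_0+2k}$ keeps $n-d$ fixed, tracking relations at a bounded offset \emph{below} the top norm $d$. In the GKRW formalism these correspond to the two natural stabilisation maps, and the claim is that each becomes an isomorphism on $H_1^{\QE_\infty}$ for $k$ large. The key input is a slope estimate controlling $H_*^{\QE_\infty}$ in low homological degree: I would use the explicit generators $\delta_i$ of Proposition \ref{prop:deltagenerators} as the $0$-cells and the explicit norm-$(m+1)$ relation in $\A(2m)$ derived from Lemma \ref{lem:ysexpansion} as an explicit family of $1$-cells, and then bound the connectivity of the cell attachments.

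The norm-$\le d$ bound (first bullet) I would attack by sharpening the syzygy argument already given in the text for the weaker threshold $n\ge d+m$. There a monomial of large norm, after removal of the lowest-index variable $X_j$, leaves a cofactor of norm $\ge d$, hence lying in $I_d$ because $\A(d)$ vanishes in norms $\ge d$; to push the threshold down from $d+m$ to $d$ one must show that every monomial of norm $n$ with $d<n\le d+m-1$ is decomposable in $I_d$, which is no longer automatic. I would combine the $\Q[X_1,\dots,X_m]$-module structure of $I_d$ with the top-diagonal stabilisation of the previous paragraph to reduce the statement, for each fixed offset $n-d>0$, to the finitely many cases $d\le 10$ of Table \ref{tab:reltable}. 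The hard part here is establishing the stabilisation \emph{with the correct, namely zero, stable value} in norms just above $d$, rather than merely proving eventual constancy.

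Finally, for the two closed-form totals I would sum $\fr_{d,n}$ over the band $m+1\le n\le d$ (even) resp.\ $m+2\le n\le d$ (odd). The subtlety, and the main obstacle of the whole program, is that the stabilisation statements only control $\fr_{d,n}$ at bounded offset from either end of a band whose width $m\sim d/2$ grows, whereas the interior values visibly increase with $d$ (e.g.\ the peak $\fr_{10,8}=7$), so the total is \emph{not} a finite sum of stable constants. To capture the interior I would pass to the bigraded deviation formalism: the bigraded Hilbert series of $\A(d)$, which is the explicit combinatorial generating function counting sequences $\ul$ with prescribed $N(\ul)$ and $\supp(\ul)\le d$, is the Euler characteristic of the minimal $\QE_\infty$-model and therefore equals an alternating product over all the $H_i^{\QE_\infty}(\A(d))$. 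The quadratic shape of the conjectured answers strongly suggests that the contributions of the higher homology $H_{\ge2}^{\QE_\infty}$ telescope against a second, ``dual'' stability in the middle of the band. Proving that this correction turns the raw Euler characteristic into exactly the stated quadratic polynomial is the step I expect to be genuinely hard, and the one on which a full proof of the conjecture will stand or fall.
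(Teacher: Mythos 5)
The statement you are addressing is stated in the paper as a \emph{Conjecture}: the authors give no proof of any of its five bullets. Their evidence is the table of numbers $\fr_{d,n}$ obtained by computer for $d\le 10$ (plus a numerically approximated row for $d=11$), together with two rigorous partial results, namely Corollary \ref{cor:firstminrel} (the count of lowest-norm minimal relations) and the elementary syzygy argument showing $\fr_{d,n}=0$ for $n\ge d+\lfloor d/2\rfloor$. Your text should therefore be judged as a proposed proof where the paper offers none, and as such it cannot be accepted: it is a research program in which every load-bearing step is either not constructed or explicitly deferred. Concretely: (i) you never build the $E_2$/GKRW cell structure on $\bigoplus_d\A(d)$, the stabilisation maps on $H_1^{\QE_\infty}$, or the connectivity estimates that would make the two stabilisation bullets theorems --- note also that the only maps available in the paper, the $p^h_d\colon\A(h)\to\A(d)$, \emph{decrease} $d$, so the direction and very existence of the stabilisation maps you invoke would have to be established from scratch; (ii) your reduction of the first bullet ``to the finitely many cases $d\le10$'' presupposes top-diagonal stabilisation \emph{with stable value zero} in norms just above $d$, which you yourself flag as the hard part and do not prove; (iii) for the closed-form totals, the bigraded Hilbert series of $\A(d)$ determines only an alternating sum over all $H_i^{\QE_\infty}(\A(d))$, so without a vanishing or computation of $H_{\ge2}^{\QE_\infty}$ in the relevant bidegrees it cannot isolate the $H_1$ contribution; the ``telescoping against a dual stability in the middle of the band'' is pure speculation with no mechanism behind it.

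A smaller but real point: even granting both stabilisation bullets, the totals in the last two bullets are not consequences of them, because the band of norms supporting relations has width growing like $d/2$ while stabilisation (as you formulate it) only controls $\fr_{d,n}$ at bounded offset from either end; the interior values (e.g.\ $\fr_{10,8}=7$) grow with $d$. You acknowledge this, which is to your credit, but it means the quadratic formulas remain unaddressed. In summary, your proposal correctly identifies the structural reasons one might believe the conjecture (the identification $\fr_{d,n}=\dim_\Q H_1^{\QE_\infty}(\A(d))_n$, the two natural diagonals relative to the band $m+1\le n\le d$, and the consistency of the quadratic totals with Table \ref{tab:reltable}), and it is a reasonable outline of how one might eventually attack it, but it does not prove any of the five assertions, and the paper does not either.
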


\printbibliography
\appendix

\section{Minimal presentation algorithm}\label{app:algo}
The formula in Proposition \ref{prop:cycle_recursion} allows us to recursively compute any coefficient of the form $\theta(\one_\ell;\ual,\ube)$ efficiently. In order to extend the computation via the formula in Proposition \ref{prop:general_recursion}, we need to generate all $\uep$-decompositions of $\ual$. This can be done by iterating over the Cartesian product $\prod_{i=2}^\nu \N_{\leq \min(\alpha_i,\nu)}$ and checking the condition $|\uA|\le \nu$. 

Once the above procedure is implemented, a general procedure for computing basis expansions of products can be implemented by finding the coefficient for each basis element. The basis elements are in correspondence with partitions and thus we can borrow a preexisting fast algorithm for generation of partitions, e.g. \texttt{accel\_asc} from \cite{kelleher2009generating}. Let $m=\floor{d/2}$; since $\gamma_2,\ldots,\gamma_{m+1}$ are minimal generators for $\A(d)$, one can compute basis representations of all monomials to find a minimal set of relations:

\vspace{7pt}
{ \small 
\begin{mdframed}[roundcorner=10pt,frametitle={Computation of minimal presentation of $\A(d)$
}]
\begin{enumerate}
    \item For each norm $2\leq n\leq \floor{3d/2}=d+m$, find a sufficient set of relations in norm $n$ with the following procedure:
    \begin{itemize}
        \item Let $B_n$ be the set of all basis elements with norm $n$.
        
        \item Let $M_n$ be the set of monomials of $\gamma_2,\ldots,\gamma_{m+1}$ with norm $n$.
        
        \item Compute the $B_n\times M_n$ matrix $A^n$ defined by setting $A_{bm}^n$ to be the coefficient of $b\in B_n$ in the basis expansion of $m\in M_n$. 
        
        \item A sufficient set of relations in norm $n$ can be found by computing a basis of $\operatorname{Null}(A_n)$. 
        Compute a set $R_n$ consisting of such basis vectors.
    \end{itemize}
    Combined together, $R = R_1 \cup \cdots \cup R_{d+m}$ correspond to a sufficient set of relations in $\A(d)$.
    
    \item Generate every possible \textit{lifted} relation obtained by multiplying any relation in $R$ by any generator:
    \begin{itemize}
        \item For each $1\leq i \leq m$ and each $1\leq k \leq d+m-i$ construct an $M_{k+i}\times M_k$ conversion matrix $C^{i,k}$ with entries 
        \[
            C_{m,m'}^{i,k} = \one(m=\gamma_i m'), \qquad m \in M_{k+i}, m' \in M_k.
        \]
        \item Construct the set of lifted relations in norm $n$ by 
        $$L_n = \cup_{i+k=n} \{C^{i,k}r \colon r \in R_k\}.$$ 
    \end{itemize}
    
    \item Thin out the relations by first concatenating the relations into a matrix $W_n = \begin{pmatrix} L_n & R_n \end{pmatrix}$ for each norm $n$. Then the columns of $W_n$ containing the pivots after Gaussian elimination will constitute a minimally sufficient set of relations in norm $n$. Collectively these relations correspond to a minimal representation of $\A(d)$ with generators $\gamma_2,\ldots,\gamma_{m+1}$.
\end{enumerate}
\end{mdframed}
}
\vspace{7pt}
\textbf{Remark.} 
A first approach is to store the basis elements as lists of permutations with e.g. the SymPy \texttt{permutation} class. This allows for a more direct implementation of multiplication based on the presentation $\A(d) \cong (\grad^N\Q[\fS_d])^{\fS_d}$. However, this approach becomes computationally infeasible at around $d\approx 8$. The power of the recursion formula is that it makes it possible to store the basis element $\fg_{\ul}$ as a simple tuple $\ul$. With this reduction, there are no problems with memory usage and the computational bottleneck turns out to be symbolically computing $\operatorname{Null}(A_n)$ rather than $A_n$. The current computations were carried out on an M1 processor with approximate runtimes of $0.1, 0.3$ and $1189$ seconds for $d=8,9,10$, respectively. Thus it seems unclear if $d>10$ can be computed with high performance computing using the current implementation.

\section{Minimal presentations for $\A(9)$ and $\A(10)$}\label{app:A9A10}
\setcounter{MaxMatrixCols}{40}
We present the relations in each norm as arrays. For $\A(9)$ the top row denotes the monomials in the given norm and each subsequent row denotes the coefficients of a relation. For $\A(10)$ the arrays are transposed due to their large width. The generators are denoted by $(a,b,c,d,e) =(\gamma_2,\gamma_3,\gamma_4,\gamma_5, \gamma_6)$. 

\subsection{A minimal set of relations in $\A(9)$} 
In norm $6$ and $7$ we have
\[ \begin{matrix}
a^{6} & a^{4}b & a^{3}c & a^{2}b^{2} & a^{2}d & abc & b^{3} & bd & c^{2} \\
-1 & 24 & -104 & -87 & 270 & 216 & 18 & 0 & 0 \\
-11 & 258 & -1120 & -837 & 2910 & 1488 & 0 & 2160 & 864 \\
\end{matrix} \]

\[ \begin{matrix}
a^{7} & a^{5}b & a^{4}c & a^{3}b^{2} & a^{3}d & a^{2}bc & ab^{3} & abd & ac^{2} & b^{2}c & cd \\
1 & -17 & 36 & 36 & 0 & 0 & 0 & 0 & 0 & 0 & 0 \\
1 & -11 & -30 & 0 & 270 & 162 & 0 & 0 & 0 & 0 & 0 \\
251 & -3895 & 16284 & 0 & -50328 & 0 & 0 & 326592 & 0 & 0 & 0 \\
37 & -1001 & 9636 & 0 & -37800 & 0 & 0 & 0 & 0 & 46656 & 0 \\
85 & -857 & -2652 & 0 & 26712 & 0 & 0 & 0 & 0 & 0 & 373248 \\
\end{matrix} \] \phantom{X} \\
The relations in norm 8 and 9 may be summarized by
\begin{align*}
    a^8 = 9a^6b = 81 a^5c = 729 a^4d = 59049 b^2d = 531441 d^2, \\
    a^9 = 0.
\end{align*}

\subsection{A minimal set of relations in $\A(10)$}
In norm $6,7$ and $8$ we have
\[ \begin{matrix}
a^{6} &1\\
a^{4}b&-34\\
a^{3}c&224 \\
a^{2}b^{2} & 207 \\
a^{2}d&-1170\\
abc& -1136\\
ae&3360\\
b^{3}& -108\\
bd&1200\\
c^{2}& 480
\end{matrix} \]

\[ \begin{matrix}
a^{7} & -1 &-3   &  -7   &  -59 \\
a^{5}b &27 &86   &   229  &  1283\\
a^{4}c&-116 &-468  &  -1652   &  -2404 \\
a^{3}b^{2}&-126 &-393  &  -1152   &  -3654\\
a^{3}d &300&1950   &  9450   &  -18450 \\
a^{2}bc &360&1420   &   6080   &  -10040\\
a^{2}e&60&-4200  &  -29460   &  140700 \\
ab^{3}&0&0   &  0   &  0\\
abd& 0& 0  &  -6600   &   46200 \\
ac^{2}&0& 0  &  0   &  0\\
b^{2}c&0& 600  &  0   &  0\\
be&0& 0  &  21600  &  0\\
cd&0&0  &   0  &   108000
\end{matrix} \] 
    \[    \begin{matrix}
    a^{8} &  1 &  7 &   21 &   221  &  1457 &   -2659  &  18341\\
     a^{6}b  &  -19 &  -83  &  -249  &  -2049   &  -14133 &   10671 &  -142329\\
    a^{5}c  &  40  &  -320  &  40 &  -1960   &  -5320  &   191840 &  -813160\\
    a^{4}b^{2} &  50 &  0 &  0 &  0  &  0 &   0 &  0\\
    a^{4}d &  0 &  2500 &  -5500  &  -167500  &   -1049500  &   2835500 &  -8483500\\
     a^{3}bc  &   0 &   2000  &   0 &  0   &  0 &   0 &  0\\
    a^{3}e &  0 &  0 &   60000  &  1680000  &  10320000  &   -32592000 &   122640000 \\
    a^{2}bd &  0 &  0 &  30000  &  0  &  0 &   0 &  0\\
    b^4 &  0 &  0 &  0 &   30000  &  0 &   0 &  0\\
     b^{2}d &  0 &  0 &  0 &  0  &  2700000  &   0 &  0\\
     ce &  0 &  0 &  0 &  0  &  0 &   97200000 &  0\\
    d^{2} & 0  &  0 &  0 &  0  &  0 &   0 &  243000000
    \end{matrix} \] \phantom{X} \\

The relations in norm $9$ and $10$ may be summarized by
\begin{align*}
    a^9 = 10 a^7b = 100 a^6c = 1000 a^5d = 10^7de, \\
    a^{10} = e^2 = 0.
\end{align*}

\end{document}